\newtheorem{theorem}{Theorem}[section]
\newtheorem{proposition}[theorem]{Proposition}
\newtheorem{lemma}[theorem]{Lemma}
\newtheorem{remark}{Remark}
\newcommand{\includegraph}[2][]{\ifnum\pdfoutput=0\includegraphics[#1]{#2.eps}\else\includegraphics[#1]{#2.pdf}\fi}
\newtheorem{mtheorem}{Theorem}
\author[1]{Renato Huzak}
\author[2]{Otavio Henrique Perez}
\affil[1]{Hasselt University, Campus Diepenbeek, Agoralaan Gebouw D, 3590 Diepenbeek, Belgium}
\affil[2]{Universidade de S\~{a}o Paulo (USP), Instituto de Ci\^{e}ncias Matem\'aticas e de Computa\c{c}\~{a}o (ICMC). Avenida Trabalhador S\~{a}o Carlense, 400, CEP 13566-590, S\~{a}o Carlos, S\~{a}o Paulo, Brazil}
\title{An unbounded number of canard limit cycles in linear regularizations of piecewise linear systems\footnote{Corresponding Author: Otavio Henrique Perez}}
\date{}
\begin{document}
\maketitle

\begin{abstract}

The purpose of this paper is to study the number of limit cycles of canard type in linear regularizations of piecewise linear systems with non-monotonic transition functions. Using the notion of slow divergence integral and elementary breaking mechanisms, we construct systems with an arbitrary finite number of hyperbolic limit cycles. The Hopf breaking mechanism deals with transition functions with precisely one critical point in the interval $(-1,1)$. On the other hand, the jump breaking mechanism produces any number of limit cycles using transition functions with precisely three critical points in $(-1,1)$.  

\end{abstract}
\textit{Keywords:} Canard cycles; slow divergence integral; slow-fast Hopf point; jump point; regularization. \newline
\textit{2020 Mathematics Subject Classification:} 34E15, 34E17, 34C40.

\tableofcontents

\section{Introduction}\noindent

In this paper we consider planar piecewise linear (PWL) systems of the form
\begin{align}
 \dot z &=\begin{cases}
           X(z)  \text{ for }h(z)>0,\\
           Y(z)  \text{ for }h(z)<0,
          \end{cases}\label{pws}\quad z=(x,y)\in \mathbb{R}^{2},
\end{align}
where the vector fields $X=(X_1,X_2)$ and $Y=(Y_1,Y_2)$ and the function $h:\mathbb R^2\rightarrow \mathbb R$, $\nabla h\ne 0$, are each affine. The set $\Sigma:=h^{-1}(0)$ is called the switching line. This class of piecewise smooth vector fields is the central topic of a wide number of papers, see for instance \cite{Gasull2020,LlibreOrd,Han2010,LT,FrPoTo,HuanYang2,LiLiuLli,LiLli,MedTorr}.

Limit cycles of \eqref{pws} have attracted great attention from many mathematicians and interesting results have been proven. Lum and Chua \cite{lum1991a} conjectured that the number of limit cycles of  \eqref{pws} in the continuous case (i.e., $X(z)=Y(z)$ for all $z\in \Sigma$) is at most one. The conjecture was proven by Freire et al. \cite{Freire}.

The determination of the maximum number of crossing limit cycles in discontinuous PWL systems \eqref{pws} is more challenging (see e.g. \cite{BragaMello,HuanYang,freire2013a,llibre2013a,Llibre3LC,{esteban2021a,li2021a}} and references therein). Using a case-independent approach based on integral representations of the Poincar\'e half-maps \cite{Carmona}, Carmona, Fern\'andez-S\'anchez and Novaes showed that the maximum number of crossing limit cycles is uniformly bounded by $8$ (see \cite{carmona2023a}) and gave the first case-independent proof of Lum and Chua's conjecture (see \cite{carmona2021a}). To the best of our knowledge, $3$ crossing limit cycles have been found (see \cite{HuanYang,Llibre3LC}).  

We point out that the interest in the number of crossing limit cycles for \eqref{pws} is closely related to the second part of Hilbert's 16th problem {\cite{smale}}. The problem asks if there is a finite upper bound on the number of limit cycles for polynomial vector fields of a given degree $n$, and it is unsolved even for quadratic vector fields \cite{Program}.

In this paper, we focus on the following natural question, also related to Hilbert's 16th problem: \textit{Is there an upper bound on the number of limit cycles of regularized piecewise polynomial vector fields}? Even though regularized vector fields are not polynomial, this question is still interesting and its answer is non-trivial as we shall discuss. 

In \cite{RHKK}, it has been proved that the number of limit cycles of regularized (according to Sotomayor--Teixeira \cite{Sotomayor96}) piecewise quadratic systems is unbounded. More precisely, there exists a piecewise quadratic vector field satisfying the following property: for a given integer $k > 0$, there is a monotonic transition function $\varphi_{k}$ such that the regularized vector field has at least $k+1$ hyperbolic limit cycles. Of course, one can also expect an unbounded number of limit cycles when considering Sotomayor--Teixeira regularized piecewise polynomial systems of higher degree. We believe that the only case where one could expect a finite upper bound on the number of limit cycles is Sotomayor--Teixeira regularizations of PWL systems.

One can also state a similar problem for different regularization processes. In \cite{Otavio25} the authors considered the so called \textit{non linear regularizations} and proved that non-linearly regularized PWL vector fields can also produce an unbounded number of limit cycles using monotonic transition functions. This result is true even for non linear regularizations of quadratic degree, and the very same paper considers limit cycles of non linearly regularized PWL vector fields of higher degree. See \cite{Jeff,NovJeff} for theoretical aspects and applications of such regularizations.

The goal of this paper is to prove that the number of limit cycles of \textit{linear regularizations} of PWL systems \eqref{pws} is \textit{unbounded}, using \textit{non monotonic} transition functions. It will be clear in the next sections that linear regularizations with monotonic transition functions coincide with the classical Sotomayor--Teixeira regularization. Therefore, it remains an open problem to find the maximum number of limit cycles in regularized PWL vector fields following the classical Sotomayor--Teixeira process.

More precisely, we consider the $\varepsilon$-family of smooth vector fields  
\begin{equation}
    \label{LINREG-Intro}
    Z_\varepsilon(x,y):=\frac{1+\varphi\left(\frac{h(x,y)}{\varepsilon}\right)}{2}X(x,y)+\frac{1-\varphi\left(\frac{h(x,y)}{\varepsilon}\right)}{2}Y(x,y),
\end{equation}
 where $\varepsilon>0$ is a small parameter, $X$, $Y$ and $h$ are introduced in \eqref{pws}. Here, we assume that $h(x,y)=x$ and $\varphi:\mathbb{R}\rightarrow\mathbb{R}$ satisfies the following conditions: \textbf{(1)} $\varphi$ is $C^{\infty}$-smooth and \textbf{(2)} $\varphi(t) = -1$ if $t \leq -1$ and $\varphi(t) = 1$ if $t \geq 1$. We call $\varphi$ a transition function. We say that the transition function $\varphi$ is monotonic if it satisfies
\textbf{(3)} $\varphi'(t) > 0$ if $t\in (-1,1)$. The system \eqref{LINREG-Intro} is called a {$\varphi$-linear regularization of \eqref{pws} and, if $\varphi$ is monotonic, then \eqref{LINREG-Intro} is the well-known Sotomayor--Teixeira regularization \cite{Sotomayor96}.

We show that there exist linear vector fields $X$ and $Y$ such that the following is true: for any integer $k > 0$, there exists a non-monotonic transition function $\varphi :\mathbb{R}\rightarrow\mathbb{R}$ such that the {$\varphi$-linear regularization  \eqref{LINREG-Intro} has at least $k+1$ hyperbolic limit cycles, for each $\varepsilon>0$ small enough. Moreover, the number of critical points of $\varphi$ is fixed (i.e., independent of $k$). For a precise statement of this result we refer to Theorems \ref{thm-main-Hopf} and \ref{thm-main-jump} in Section \ref{section-statements}.

The hyperbolic limit cycles in Theorems \ref{thm-main-Hopf} and \ref{thm-main-jump} are produced by canard cycles associated with the slow-fast system 
\begin{equation}\label{LINREG-rescaled}
\left\{
\begin{array}{rcl}
   \dot{x} &=& \displaystyle\frac{X_{1}(\varepsilon x,y) + Y_{1}(\varepsilon x,y)}{2} + \varphi(x)\frac{X_{1}(\varepsilon x,y) - Y_{1}(\varepsilon x,y)}{2}, \\ 
   \dot{y} &= &\varepsilon\left(\displaystyle\frac{X_{2}(\varepsilon x,y) + Y_{2}(\varepsilon x,y)}{2} + \varphi(x)\frac{X_{2}(\varepsilon x,y) - Y_{2}(\varepsilon x,y)}{2}\right),
\end{array}
\right.
\end{equation}
obtained after the performing $x=\varepsilon\tilde{x}$ to \eqref{LINREG-Intro} and multiplication by $\varepsilon>0$ (we drop the tildas in \eqref{LINREG-rescaled}). Canard cycles are limit periodic sets of \eqref{LINREG-rescaled}, defined for $\varepsilon=0$, that can produce limit cycles of \eqref{LINREG-rescaled} (or \eqref{LINREG-Intro}) for $\varepsilon>0$ small. They consist of fast (horizontal) orbits and at least one attracting and one repelling portion of the curve of singularities (for more details, see Section \ref{section-def-Hopf-jump}). Clearly, we have to find appropriate PWL vector field $(X,Y)$ and transition function $\varphi$ in \eqref{LINREG-rescaled} such that the canard cycles exist. 

\begin{figure}[ht]
	\begin{center}
		\includegraphics[width=8.9cm,height=3.4cm]{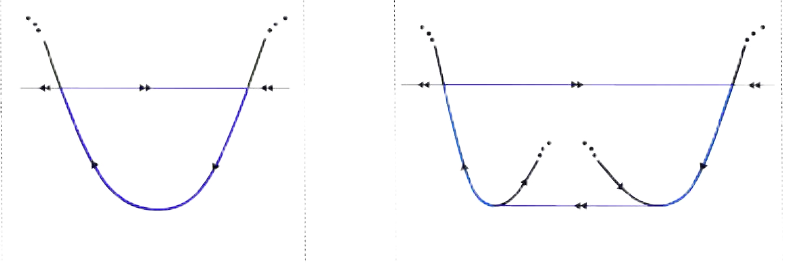}
		{\footnotesize 
        \put(-209,-14){$(a)$}
        \put(-70,-14){$(b)$}
                \put(-264,-7){$x=-1$}
                        \put(-165,-7){$x=1$}
                        \put(-135,-7){$x=-1$}
                        \put(-9,-7){$x=1$}
}
         \end{center}
	\caption{Possible canard cycles in \eqref{LINREG-rescaled} for $\varepsilon=0$. (a) Canard cycle with a slow-fast Hopf point. (b)  Canard cycle containing a jump connection.}
	\label{fig-both-mechanisms} 
\end{figure}

In this paper, we consider two important types of canard cycles that can occur in \eqref{LINREG-rescaled} when $\varepsilon\to 0$. The first type is related to the so-called \textit{Hopf breaking mechanism} \cite[Section 6.3]{DDR-book-SF} (see also Section \ref{subsection-Hopf}). This mechanism contains a slow-fast Hopf/canard point near which the passage from an attracting branch to a repelling branch of the curve of singularities is possible, see Figure \ref{fig-both-mechanisms}(a). If we add a breaking parameter to such a slow-fast Hopf point, canard cycles  in Figure \ref{fig-both-mechanisms}(a) can produce limit cycles of \eqref{LINREG-rescaled} for $\varepsilon> 0$ small. The Hopf breaking mechanism has been used to prove Theorem \ref{thm-main-Hopf}, with the breaking parameter included in $X$ and $Y$ (see Section \ref{section-proof-Hopf-main}).

The second type deals with the so-called \textit{jump mechanism}. In this case, we have two (generic) jump points that are connected by a fast orbit, see Figure \ref{fig-both-mechanisms}(b). If we add a breaking parameter to such a connection, we get a (generic) jump breaking mechanism  \cite[Section 6.2]{DDR-book-SF} (see also Section \ref{subsection-jump}). In Theorem \ref{thm-main-jump}, we generate limit cycles using the jump breaking mechanism, where we include the breaking parameter in $\varphi$ (see Section \ref{section-proof-jump-main}).

If system \eqref{LINREG-rescaled} has a slow-fast Hopf point or a jump point at $(x,y)=(x_{0}, y_{0})$, with $x_{0}\in (-1,1)$, then $\varphi'(x_{0})=0$ (that is, $x_{0}$ is a critical point of the transition function), see Section \ref{section-def-Hopf-jump}. This has already been observed in \cite{Otavio23} for jump points. We can therefore have a Hopf (or jump) breaking mechanism in the slow-fast system \eqref{LINREG-rescaled} only if we drop the monotonicity condition in the Sotomayor--Teixeira regularization. This paper can also be seen as a continuation of \cite{Otavio23}, when we deal with limit cycles produced by canard cycles such as in Figure \ref{fig-both-mechanisms}(b). For a precise definition of slow-fast Hopf points and jump points, see Section \ref{section-def-Hopf-jump}.

In Theorems \ref{thm-main-Hopf} and \ref{thm-main-jump}, we choose $X$ and $Y$ in such a way that the slow-fast system \eqref{LINREG-rescaled} becomes a classical Li\'enard equation (see e.g. the system \eqref{classical} in Section \ref{section-def-Hopf-jump}). Limit cycles of canard type in (slow-fast) classical Li\'enard equations can be studied using the notion of slow divergence integral \cite[Chapter 5]{DDR-book-SF} (see \cite{DDMoreLC,SDICLE1,DPR,DHGener} and references therein). Simple zeros of the slow divergence integral correspond to hyperbolic limit cycles (see Theorems \ref{thm-Hopf-simple-zeros} and \ref{thm-Jump-simple-zeros} in Sections \ref{subsection-Hopf} and \ref{subsection-jump}). Moreover, the main tool applied in the proof of the main results of \cite{Otavio25,RHKK} discussed previously was the slow divergence integral.

In this paper, we are only interested in the limit cycles produced by canard cycles located inside the regularization stripe. The study of canard cycles with portions located outside the stripe, in linear regularizations of PWL systems with non-monotonic transition functions, is left for future research. See also \cite{Otavio25}.

This paper is organized as follows. In Section \ref{section-statements}, we state our main results. In Section \ref{section-def-Hopf-jump}, we define Hopf and jump mechanisms in slow-fast systems, and then we discuss these notions for classical Li\'enard equations and regularizations of PWL systems. Finally, in Section \ref{section-proof-all}, we prove Theorems \ref{thm-main-Hopf} and \ref{thm-main-jump}.


\section{Statement of the main results}\label{section-statements}\noindent

We consider $h(x,y) = x$ in Equation \eqref{LINREG-Intro}, and we obtain
\begin{equation}
    \label{LINREG}
    Z_\varepsilon(x,y):=\frac{1+\varphi\left(\frac{x}{\varepsilon}\right)}{2}X(x,y)+\frac{1-\varphi\left(\frac{x}{\varepsilon}\right)}{2}Y(x,y),
\end{equation}
where $\varepsilon>0$ is a parameter kept small, $X=(X_1,X_2)$ and $Y=(Y_1,Y_2)$. Theorems \ref{thm-main-Hopf} and \ref{thm-main-jump} stated below deal with the number of limit cycles of \eqref{LINREG}, with linear vector fields $X$ and $Y$ and non-monotonic transition functions $\varphi$. 

\begin{mtheorem}[Hopf breaking mechanism]
    \label{thm-main-Hopf} 
    There exist linear vector fields $X(\cdot,\alpha)$ and $Y(\cdot,\alpha)$, depending on a parameter $\alpha\in \mathbb R$, such that the following is true: for any integer $k>0$, there exist a non-monotonic transition function $\varphi_k:\mathbb{R}\rightarrow\mathbb{R}$, with precisely $1$ critical point in $(-1,1)$, and a smooth
function $a_k:[0,\varepsilon_k]\to\mathbb R$, with $\varepsilon_k>0$ small enough and $a_k(0)=0$, such that the $\varphi_k$-linear regularization in \eqref{LINREG} with $\alpha=\varepsilon a_k(\varepsilon)$ has at least $k+1$ hyperbolic limit cycles, for each $\varepsilon\in(0,\varepsilon_k]$.
\end{mtheorem}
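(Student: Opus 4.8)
The plan is to reduce the problem to the study of simple zeros of a slow divergence integral associated with a classical Liénard equation, and then to engineer the transition function $\varphi_k$ so that this integral has at least $k$ simple zeros. First I would choose the linear vector fields $X(\cdot,\alpha)$ and $Y(\cdot,\alpha)$ so that, after the rescaling $x=\varepsilon x$ and multiplication by $\varepsilon$ that produces \eqref{LINREG-rescaled}, the resulting slow-fast system is a classical Liénard equation of the form $\dot x = y - F(x)$, $\dot y = \varepsilon(\alpha - \dots)$, where the critical manifold is $y=F(x)$ and $F$ is built out of $\varphi$. The key point, already noted in the excerpt, is that a slow-fast Hopf point of \eqref{LINREG-rescaled} at $(x_0,y_0)$ with $x_0\in(-1,1)$ forces $\varphi'(x_0)=0$; so I would place the single critical point of $\varphi$ at the origin and arrange $X,Y$ so that the origin is a slow-fast Hopf/canard point, with $\alpha$ serving as the breaking parameter of the Hopf mechanism.

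Next I would invoke the general machinery quoted as Theorem \ref{thm-Hopf-simple-zeros}: for the Hopf breaking mechanism, hyperbolic limit cycles of the canard family correspond to simple zeros of the slow divergence integral $I(v)$ taken along the canard cycle of ``height'' $v$, where $v$ ranges over an interval of admissible levels inside the regularization stripe. Thus the whole problem becomes: produce a $\varphi_k$ with exactly one critical point in $(-1,1)$ for which $I(v)$ has at least $k$ simple zeros on the relevant interval. I would write $I(v)$ explicitly as an integral over $x$ (or over the slow variable) of a weight depending on the divergence of the fast field and on the slow dynamics, both of which are determined by $F$ and hence by $\varphi$. The goal is then to show that the shape of $\varphi$ on $(-1,1)$ gives us enough freedom to prescribe the oscillations of $I$.

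The main obstacle — and the technical heart of the argument — is to manufacture $k$ simple zeros of $I(v)$ while keeping $\varphi$ constrained to a single critical point in $(-1,1)$ (so that the geometry of Figure \ref{fig-both-mechanisms}(a) is preserved and no extra Hopf points are introduced). The trick I expect to use, following the strategy of \cite{RHKK,Otavio25}, is to let $\varphi$ have a fixed profile near its critical point but to add small, localized ``bumps'' or oscillations to $\varphi$ away from the critical point that do not create new critical points of $\varphi$ in $(-1,1)$ yet imprint sign changes on the integrand of $I$. Concretely, I would design $\varphi_k$ so that $I(v)$ changes sign at $k$ prescribed levels $v_1<\dots<v_k$ and has nonzero derivative there; the localization of the perturbations lets one control $I(v_j)$ essentially independently while $I'(v_j)\ne 0$ is obtained by making the perturbations ``transversal'' in $v$. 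Since the number of critical points of $\varphi$ is fixed, the construction must encode growing complexity of $I$ through the \emph{values} of $\varphi$ rather than through its critical structure, and verifying that this is possible under conditions \textbf{(1)}--\textbf{(2)} (smoothness and the clamping $\varphi\equiv\pm1$ outside $[-1,1]$) is the delicate step.

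Once $k$ simple zeros of $I$ are in place, the conclusion is essentially automatic: by Theorem \ref{thm-Hopf-simple-zeros} each simple zero yields one hyperbolic limit cycle of \eqref{LINREG-rescaled} for all small $\varepsilon>0$, and these correspond, via the inverse rescaling, to $k$ hyperbolic limit cycles of the $\varphi_k$-linear regularization \eqref{LINREG}. I would then choose the smooth breaking function $a_k$ and the threshold $\varepsilon_k$ so that $\alpha=\varepsilon a_k(\varepsilon)$ tunes the Hopf breaking parameter into the regime where all $k$ canard cycles persist; the extra ``$+1$'' limit cycle comes from the small limit cycle born directly at the slow-fast Hopf point (the Hopf bifurcation near the canard point itself), giving $k+1$ hyperbolic limit cycles in total. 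Finally I would check that $a_k(0)=0$ and that the whole configuration stays inside the regularization stripe $x\in(-1,1)$, as required.
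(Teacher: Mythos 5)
Your skeleton matches the paper's: choose $X,Y$ so that \eqref{LINREG-rescaled} becomes a classical Li\'enard equation whose critical curve $y=F(x)$ is built from $\varphi$, place the unique critical point of $\varphi$ at the slow-fast Hopf point, and convert simple zeros of the slow divergence integral into hyperbolic limit cycles via Theorem \ref{thm-Hopf-simple-zeros}. But the step you yourself call ``the technical heart'' --- forcing $k$ simple zeros of $I$ while $\varphi$ keeps a single critical point in $(-1,1)$ --- is precisely the step you do not carry out. Your proposed mechanism (small localized ``bumps'' away from the critical point that ``imprint sign changes on the integrand'' and control the values $I(v_j)$ ``essentially independently'') is not justified: you give no quantitative link between the profile of $\varphi$ and the zeros of $I$, and no argument that the required oscillations of $I$ can be achieved without derivative control that would reintroduce critical points. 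The paper's resolution is a concrete and different device: take $F(x)=\tfrac12 x^2+\delta F_o(x)$ with $F_o$ odd, $F_o'(0)=0$ and $\delta$ small; then by \cite[Proposition 1]{SDICLE1} the slow divergence integral satisfies $I_H(x)=-2\delta\bigl(F_o(x)+O(\delta)\bigr)$, so choosing $F_o(x)=x^3(x^2-\tilde x_1^2)\cdots(x^2-\tilde x_k^2)$ plants $k$ simple zeros at prescribed locations, while $F'(x)=x+\delta F_o'(x)$ still vanishes only at $x=0$ for $\delta$ small. It is this even/odd splitting, not independent local bumps, that encodes arbitrarily many zeros of $I_H$ in the \emph{values} of $\varphi$ with a single critical point; without it (or an equivalent) your argument has a genuine gap. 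The remaining construction of $\varphi_k$ from $\psi=F-2$ by cut-off functions on $(\rho,1)$ and $(-1,-\rho)$, checked to add no critical points, is routine but also needs to be done.

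Two further inaccuracies. First, the $(k+1)$-st limit cycle in Theorem \ref{thm-Hopf-simple-zeros} is Hausdorff close to a \emph{large} canard cycle $\Gamma^H_{x_{k+1}}$ with $x_{k+1}>x_k$ chosen beyond the last zero of $I_H$; it is not ``the small limit cycle born directly at the slow-fast Hopf point.'' Second, your slow equation $\dot y=\varepsilon(\alpha-\dots)$ is the $l=1$ form, for which Proposition \ref{prop-properties} shows a slow-fast Hopf point inside the stripe is impossible; the paper needs $X_2=Y_2=\alpha-x$ together with $\alpha=\varepsilon a$ so that after rescaling $\dot y=\varepsilon^2(a-x)$, i.e.\ the $l=2$ normal form \eqref{classical}. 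This scaling of the breaking parameter is not cosmetic and should be stated.
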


We prove Theorem \ref{thm-main-Hopf} in Section \ref{section-proof-Hopf-main}. The limit cycles in this theorem are produced using the Hopf breaking mechanism (see Figures \ref{fig-both-mechanisms}(a) and \ref{fig-Hopf}). On the other hand, Theorem \ref{thm-main-jump} (proven in Section \ref{section-proof-jump-main}) deals with limit cycles generated by generic jump breaking mechanisms (see Figures \ref{fig-both-mechanisms}(b) and \ref{fig-jump}).

\begin{mtheorem}[Jump breaking mechanism]
    \label{thm-main-jump}
There exist linear vector fields $X$ and $Y$ such that the following is true: for any integer $k>0$, there exist a smooth $b$-family of non-monotonic transition functions $\varphi_{b,k}:\mathbb{R}\rightarrow\mathbb{R}$, with precisely $3$ critical points in $(-1,1)$, and a continuous function $b_k:[0,\varepsilon_k]\to\mathbb R$, with $\varepsilon_k>0$ small enough and $b_k(0)=0$, such that the $\varphi_{b,k}$-linear regularization in \eqref{LINREG} with $b=b_k(\varepsilon)$ has at least $k+1$ hyperbolic limit cycles, for each $\varepsilon\in(0,\varepsilon_k]$.
\end{mtheorem}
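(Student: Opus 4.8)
The plan is to reduce Theorem \ref{thm-main-jump} to a statement about the zeros of the slow divergence integral and then to realize an arbitrary number of such zeros by shaping the transition function. First I would fix the linear fields $X$ and $Y$ so that the rescaled slow-fast system \eqref{LINREG-rescaled} takes the classical Li\'enard form $\dot x = y - c\,\varphi(x)$, $\dot y = \varepsilon\, g(x,y)$ at $\varepsilon=0$, with $c\neq 0$ and $g$ affine coming from $X_2,Y_2$. This is possible because the combinations $X_1\pm Y_1$ are affine, so at $\varepsilon=0$ they reduce to affine functions of $y$ alone, which can be matched to $y$ and to a nonzero constant, respectively. With this choice the critical curve $\{y=c\,\varphi(x)\}$ is, up to the vertical scaling $c$, the graph of $\varphi$, so its fold points in the stripe are exactly the three critical points of $\varphi$ in $(-1,1)$. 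I would arrange these three folds so that two of them are generic jump points joined by a fast (horizontal) fibre, producing the configuration of Figure \ref{fig-both-mechanisms}(b), and I would insert the breaking parameter $b$ into $\varphi$ as a family $\varphi_{b,k}$ controlling the relative height of the two jump points, i.e.\ the gap in this fast connection.

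Second, I would invoke the jump breaking mechanism \cite[Section 6.2]{DDR-book-SF} together with Theorem \ref{thm-Jump-simple-zeros}: for the centred value $b=b_k(\varepsilon)$ (chosen so that the fast connection is exact in the appropriate sense, which forces $b_k(0)=0$ and typically yields only continuous dependence on $\varepsilon$), the number of hyperbolic limit cycles of \eqref{LINREG-rescaled} bifurcating from the jump canard cycles is bounded below by the number of simple zeros of the associated slow divergence integral $I$. It therefore suffices to produce a transition function $\varphi_{b,k}$ with exactly three critical points for which $I$ has at least $k+1$ simple zeros.

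Third, and this is the technical core, I would compute $I$ along the one-parameter family of jump canard cycles $\Gamma_v$ (parametrised, say, by the common level $v$ of the fast connection) and express it through $\varphi$ and $g$ as a combination $I(v)=I_{\mathrm{att}}(v)+I_{\mathrm{rep}}(v)$ of integrals of the form $\int (\varphi')^2/g\,dx$ over the attracting and repelling slow segments of $\Gamma_v$, whose endpoints depend on $v$ via $\varphi^{-1}$; orientation makes $I_{\mathrm{att}}<0$ and $I_{\mathrm{rep}}>0$. The key observation is that the constraint ``three critical points'' fixes only the \emph{topological} type of the critical curve (the number and order of the folds needed to run the jump mechanism), while leaving complete \emph{metric} freedom in the profile of $\varphi$ between consecutive folds. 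Exploiting this freedom---for instance by prescribing the values and first derivatives of $\varphi$ at $k+1$ suitably chosen levels so that $-I_{\mathrm{att}}(v)$ and $I_{\mathrm{rep}}(v)$ cross one another transversally $k+1$ times---I would force $I(v)$ to have at least $k+1$ simple zeros while keeping the number of critical points of $\varphi$ equal to three.

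Finally, by Theorem \ref{thm-Jump-simple-zeros} each simple zero of $I$ gives, for every $\varepsilon\in(0,\varepsilon_k]$ with $\varepsilon_k$ small, a hyperbolic limit cycle of \eqref{LINREG-rescaled} with $b=b_k(\varepsilon)$, located inside the regularization stripe; undoing the scaling $x=\varepsilon\tilde x$ and the multiplication by $\varepsilon$ transfers these to hyperbolic limit cycles of the regularization \eqref{LINREG}, completing the proof. I expect the main obstacle to be exactly the third step: reconciling the fixed combinatorial budget of three critical points with the need for unboundedly many \emph{simple} zeros of $I$, and checking that the realising $\varphi_{b,k}$ can be built compatibly with the boundary and smoothness conditions \textbf{(1)}--\textbf{(2)} on transition functions and with the centring of the breaking parameter. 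A secondary difficulty is verifying genericity of the two jump points and transversality (simplicity) of the zeros, which is what upgrades ``$k+1$ limit cycles'' to ``$k+1$ \emph{hyperbolic} limit cycles.''
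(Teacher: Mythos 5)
Your overall architecture matches the paper's: reduce to a classical Li\'enard equation $\dot x=y-F_b(x)$, $\dot y=-\varepsilon^2x$ by a suitable choice of $X$ and $Y$ (the paper takes $X=(y-3,-x)$, $Y=(y-1,-x)$, so that $\varphi$ becomes $F_b-2$ up to the cut-off modification outside the stripe), put the breaking parameter $b$ into the transition function as the relative height of the two jump points, and convert simple zeros of the slow divergence integral into hyperbolic limit cycles via Theorem \ref{thm-Jump-simple-zeros}. (One bookkeeping remark: by that theorem $k$ simple zeros already yield $k+1$ cycles, so you are asking for one more zero than needed; harmless, but worth noting.)

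The genuine gap is exactly at the step you flag as the technical core, and the ``complete metric freedom'' you invoke is not enough to close it. Your plan of prescribing values and derivatives of $\varphi$ at $k+1$ levels so that $-I_{\mathrm{att}}(v)$ and $I_{\mathrm{rep}}(v)$ cross transversally ignores that the endpoints of the attracting and repelling slow segments are coupled through the fast relation function $L_J$, defined by $F_0(x)=F_0(L_J(x))$, which itself changes whenever you reshape $\varphi$; you cannot tune the two contributions independently, and you must simultaneously verify that the reshaping creates no new critical points. The paper's missing ingredient is a specific perturbative structure that makes this tractable: take $F_0=P_e+\delta P_o$ with $P_e(x)=\frac{x^4}{4}-\frac{\eta^2}{2}x^2$ even and $P_o$ odd on $(-\infty,-\eta]\cup[\eta,\infty)$ with $P_o(\pm\eta)=P_o'(\pm\eta)=0$. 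At $\delta=0$ the symmetry forces $L_J(x)=-x$ and the slow divergence integral to vanish identically, and the first-order term in $\delta$ is the explicit functional $I_1(x)=\int_\eta^x\bigl(2sP_o(s)-(s^2-\eta^2)P_o'(s)\bigr)\,ds$. This functional is then \emph{inverted}: given a target polynomial $\widetilde P$ with $k$ prescribed simple zeros, the linear first-order ODE $2sP_o-(s^2-\eta^2)P_o'=\widetilde P'$ is solved in closed form for $P_o$, and smallness of $\delta$ guarantees both the persistence of the simple zeros and that $F_b$ keeps exactly three Morse critical points. Without this (or an equivalent) device, your third step remains a plausible programme rather than a proof.
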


The critical point of $\varphi_k$ in Theorem \ref{thm-main-Hopf} is of Morse type. The same is true for the critical points of $\varphi_{b,k}$ in Theorem \ref{thm-main-jump}. Notice that in Theorems \ref{thm-main-Hopf} and \ref{thm-main-jump} the number of critical points is fixed (it does not increase as $k$ increases).

\section{Generic breaking mechanisms}\label{section-def-Hopf-jump}\noindent

In this section, we consider planar slow-fast systems of the form
\begin{equation}\label{eq-def-slowfast-1}
\left\{
\begin{array}{rcl}
\dot{x} & = & f(x,y,\varepsilon), \\
\dot{y} & = & \varepsilon^{l} g(x,y,\varepsilon),
\end{array}
\right.
\end{equation}
where $\varepsilon\ge 0$ is the singular perturbation parameter kept close to zero, $l$ is a positive integer, and $f$ and $g$ are $C^\infty$-smooth functions. The overdot denotes the derivative of $x(t)$ and $y(t)$ with respect to the \textit{fast time} $t$. When $\varepsilon = 0$, the set $S = \{f(x,y,0) = 0\}$ is a curve of singularities of \eqref{eq-def-slowfast-1}, and it has horizontal intervals as fast regular orbits. A singularity $(x_{0},y_{0})\in S$ is \textit{normally hyperbolic} if $\frac{\partial f}{\partial x}(x_{0},y_{0},0) \neq 0$.

We also recall the notion of \textit{classical Li\'enard equations}, because they will play an important role in the proofs of Theorems \ref{thm-main-Hopf} and \ref{thm-main-jump}. Consider a classical Li\'enard equation 
\begin{equation}\label{classical}
    \begin{cases}
        \dot{x} = y-F(x), \\
        \dot{y} = -\varepsilon^{2} x,
    \end{cases}
    \end{equation}
where $F$ is a $C^\infty$-smooth function, and we adopt $l = 2$ (we postpone the explanation of this choice to Section \ref{sec-reg-mech}). When $\varepsilon=0$, the set $S = \{y = F(x)\}$ is a curve of singularities of system \eqref{classical}, and a singularity $(x_{0},F(x_{0}))\in S$ is normally hyperbolic if $F'(x_{0})\neq 0$ (the prime denotes the derivative with respect to $x$). The singularity is attracting when $F'(x_{0}) > 0$ and repelling when $F'(x_{0}) < 0$. Singularities $(x_{0},F(x_{0}))\in S$ that satisfy $F'(x_{0}) = 0$ are called \textit{contact points}.  

Limit cycles of slow-fast systems \eqref{eq-def-slowfast-1} (and in particular of classical Li\'enard systems \eqref{classical}) can be produced by slow-fast cycles defined for $\varepsilon = 0$. A \textit{slow-fast cycle} consists of fast orbits and compact portions of $S$. We say that a slow–fast cycle is a \textit{canard cycle} if it contains at least one attracting and one repelling portion of $S$. When canard cycles appear, the following two mechanisms play an essential role: \textbf{(a)} Hopf mechanism \cite[Section 6.3]{DDR-book-SF} (see Section \ref{subsection-Hopf}) and \textbf{(b)} jump mechanism \cite[Section 6.2]{DDR-book-SF} (see Section \ref{subsection-jump}).

\subsection{The Hopf breaking mechanism}\label{subsection-Hopf}\noindent

A singularity $(x_{0}, y_{0})\in S$ is a \textit{slow-fast Hopf point of \eqref{eq-def-slowfast-1}} if (see also \cite[Definition 2.4]{DDR-book-SF}) 
\begin{equation}\label{eq-def-hopf-turning-point}
\begin{split}
    f(x_{0}, y_{0},0) = g(x_{0}, y_{0},0) = \frac{\partial f}{\partial x}(x_{0}, y_{0},0) = 0, \qquad \  \\ 
    \frac{\partial^{2} f}{\partial x^{2}}(x_{0}, y_{0},0) \neq 0 \  \text{ and } \ \frac{\partial g}{\partial x}(x_{0}, y_{0},0)\cdot \frac{\partial f}{\partial y}(x_{0}, y_{0},0) < 0, \
\end{split}
\end{equation}

Assume that \eqref{eq-def-slowfast-1}
has a slow-fast Hopf point at $(x,y)=(x_{0}, y_{0})$. Since $\frac{\partial f}{\partial y}(x_{0}, y_{0},0) \neq  0$, the curve of singularities $S = \{f(x,y,0)=0\}$ of \eqref{eq-def-slowfast-1} for $\varepsilon=0$, near $(x,y)=(x_{0}, y_{0})$, can be represented as $y = \kappa (x)$ where $\kappa$ is a smooth function satisfying $\kappa(x_{0})=y_{0}$. Notice that $S$ has a quadratic contact with fast horizontal orbits of \eqref{eq-def-slowfast-1} with $\varepsilon=0$, at $(x,y)=(x_{0}, y_{0})$. Using \eqref{eq-def-hopf-turning-point}, it is clear that the singularities with $x$ close to $x_{0}$ and $x\ne x_{0}$ are normally hyperbolic, that is, 
$$\frac{\partial f}{\partial x}(x,\kappa(x),0) \ne 0,$$
for $x\ne x_{0}$. Then we can define the notion of slow vector field (see \cite[Chapter 3]{DDR-book-SF}) along normally hyperbolic portions of the curve of singularities near $x=x_{0}$ (its flow is often called the slow dynamics). More precisely, if we write $\tau=\varepsilon^l t$, then system \eqref{eq-def-slowfast-1} becomes
\begin{equation}\label{eq-def-slowfast-1-slow-time}
\left\{
\begin{array}{rcl}
\varepsilon^l {x}' & = & f(x,y,\varepsilon), \\
{y}' & = & g(x,y,\varepsilon),
\end{array}
\right.
\end{equation}
where the prime $'$ denotes the derivative of $x(\tau)$ and $y(\tau)$ with respect to the \textit{slow time} $\tau$. The systems \eqref{eq-def-slowfast-1} and \eqref{eq-def-slowfast-1-slow-time} are equivalent for $\varepsilon>0$. If we let $\varepsilon\to 0$ in \eqref{eq-def-slowfast-1-slow-time}, we obtain the slow system
\begin{equation}\label{SVF-final}
\left\{
\begin{array}{rcl}
0 & = & f(x,y,0), \\
{y}' & = & g(x,y,0).
\end{array}
\right.
\end{equation}

Using \eqref{SVF-final} and $y'=\kappa'(x)x'=-\frac{\frac{\partial f}{\partial x}(x,\kappa(x),0)}{\frac{\partial f}{\partial y}(x,\kappa(x),0)}x'$, we obtain the slow vector field
\begin{equation}
    \label{SD-final}
    {x}'  = -\frac{\frac{\partial f}{\partial y}(x,\kappa(x),0)}{\frac{\partial f}{\partial x}(x,\kappa(x),0)} g(x,\kappa(x),0).
\end{equation}

If $(x,y)=(x_{0}, y_{0})$ is a slow-fast Hopf point, then using \eqref{eq-def-hopf-turning-point} and L'Hospital's rule it follows that \eqref{SD-final} can be regularly extended through  $x=x_{0}$ and the slow dynamics points, locally near $x=x_{0}$, from the normally attracting branch to the normally repelling branch of $y=\kappa(x)$. This means that, for $\varepsilon>0$, orbits follow the attracting branch, pass through the slow-fast Hopf point, and then follow the repelling branch. See Figure \ref{fig-both-mechanisms}(a).

For classical Li\'enard Equations \eqref{classical}, we assume
\begin{equation}
    \label{Hopf-assumption}
    F(0)=F'(0)=0 \ \  \text{ and } \ \  \frac{F'(x)}{x}>0, \  \forall x\in [-\rho,\rho],
\end{equation}
where $\rho$ is a positive constant. 
The assumptions in \eqref{Hopf-assumption} imply that the curve of singularities $S$ contains
a normally attracting branch ($x\in (0,\rho]$), a normally repelling branch ($x\in[-\rho,0)$), and a slow-fast Hopf point at $(x, y) = (0, 0)$. In this case, the slow dynamics is given by 
\begin{equation}
    \label{slowdynamics}
    x'=\frac{dx}{d\tau}=-\frac{x}{F'(x)},
\end{equation}
where $\tau=\varepsilon^2 t$ is the slow time. Then    the slow dynamics points from the attracting part to the repelling part of $S$ near $x=0$. See Figure \ref{fig-Hopf}.

Denote the \textit{fast relation function} by $L_{H}$ (see \cite{Dbalanced}). For $x>0$, a canard cycle $\Gamma^H_x$ is the union of a fast orbit at height $y=F(x)$ and the part of the parabolic curve $S$ between the $\alpha$-limit point $(L_{H}(x),F(L_{H}(x)))$ and the $\omega$-limit point $(x,F(x))$ of the fast orbit. We have $L_{H}(x)<0$ and $F(x)=F(L_{H}(x))$. Such canard cycles are well-defined for $x\in (0,\min\{\rho, L_{H}^{-1}(-\rho)\}]$. See Figure \ref{fig-Hopf}.

\begin{figure}[ht]
	\begin{center}
		\includegraphics[width=6.6cm,height=5.8cm]{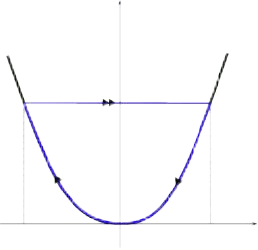}
		{\footnotesize \put(-25,109){$S$}
        \put(-89,88){$\Gamma^H_x$}
        \put(-35,10){$x$}
        \put(-178,8){$L_{H}(x)$}
        \put(2,16){$x$}
        \put(-97,165){$y$}
}
         \end{center}
	\caption{ A canard cycle $\Gamma^H_x$ created by the Hopf mechanism.}
	\label{fig-Hopf}
\end{figure}

The slow divergence integral (see \cite[Chapter 5]{DDR-book-SF} and \cite{SDICLE1}) associated with $\Gamma^H_x$ is given by
 \begin{equation}
     \label{SDI-HopfM}
     I_{H}(x) := \int_{x}^{L_{H}(x)}\frac{(F'(s))^{2}}{s}ds, \ \quad \ x\in (0,\min\{\rho, L_{H}^{-1}(-\rho)\}].
 \end{equation}
This is the integral of the divergence of the vector field \eqref{classical} for $\varepsilon=0$ (which is equal to $-F'(x)$) with respect to the slow time $\tau$ (which is $d\tau=-\frac{F'(x)}{x}dx$).
 
A criterion for the existence of limit cycles produced by $\Gamma^H_x$ is given in Theorem \ref{thm-Hopf-simple-zeros}, and its proof can be found in \cite[Theorem 2]{SDICLE1}.

\begin{theorem}
    \label{thm-Hopf-simple-zeros}

    Suppose that $I_H(x)$ has exactly $k$ simple zeros $x_1<\cdots<x_k$ in $(0,\min\{\rho, L_{H}^{-1}(-\rho)\})$. Let $x_{k+1}\in (0,\min\{\rho, L_{H}^{-1}(-\rho)\}]$ satisfy $x_k<x_{k+1}$. Then there is a smooth function $a=a(\varepsilon)$ with $a(0) = 0$, so that the perturbed system

\begin{equation}\label{classicalHHopf}
                \begin{cases}
                    \dot{x} = y-F(x), \\
                    \dot{y} = \varepsilon^2\left( a(\varepsilon) -x\right),\nonumber
                \end{cases}
            \end{equation}
has $k + 1$ periodic orbits $\mathcal{O}^{x_i}_\varepsilon$, with $i=1,\dots,k+1$, for each $\varepsilon>0$ small enough. The periodic orbit $\mathcal{O}^{x_i}_\varepsilon$ is isolated, hyperbolic and Hausdorff close to the canard cycle $\Gamma^H_{x_i}$ as $\varepsilon\to 0$.
\end{theorem}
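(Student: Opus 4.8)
The plan is to reduce the problem to locating simple zeros of a scalar displacement map and to show that this map is governed, at leading order in $\varepsilon$, by the slow divergence integral $I_H$ from \eqref{SDI-HopfM}. Concretely, I would take a section $\Sigma$ transverse to the fast orbits and parametrize it by the amplitude $x \in (0, \min\{\rho, L_H^{-1}(-\rho)\}]$, so that $x \in \Sigma$ marks the canard cycle $\Gamma^H_x$. Writing $P_{a,\varepsilon}$ for the Poincar\'e return map of \eqref{classicalHHopf} (equivalently, comparing the forward transition along the attracting branch with the backward transition along the repelling branch on a common section), I set $D(x,a,\varepsilon) = P_{a,\varepsilon}(x) - x$. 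Periodic orbits of \eqref{classicalHHopf} are the zeros of $D$ in $x$, and a zero is a hyperbolic limit cycle exactly when it is transversal, $\partial_x D \neq 0$.

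The heart of the matter is the $\varepsilon$-asymptotics of $P_{a,\varepsilon}$. Along the normally hyperbolic branches of $S = \{y = F(x)\}$, where $F' \neq 0$, Fenichel theory provides smooth attracting and repelling slow manifolds whose transition maps contract, resp.\ expand, at rates whose logarithm integrates the divergence $-F'$ against the slow time $d\tau = -\frac{F'(x)}{x}\,dx$; this is exactly the integrand of \eqref{SDI-HopfM}, and it is why $I_H$ controls the balance of contraction and expansion along $\Gamma^H_x$. The genuinely delicate region is the slow-fast Hopf point $(0,0)$, where normal hyperbolicity is lost and the bare divergence integral diverges. There I would carry out a family blow-up of $(x,y,\varepsilon) = (0,0,0)$, the standard desingularization of a canard/turning point, study the Riccati-type equation arising in the central chart, and track how the breaking parameter $a$ unfolds the attracting-to-repelling canard connection. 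Composing the blown-up passage with the two Fenichel transitions and the regular fast return, and dividing by a nonvanishing factor, I would establish that the reduced displacement map is $C^1$-close in $x$ (uniformly for small $\varepsilon$) to an expression governed by $I_H$, with $a$ entering as a genuine unfolding parameter, $\partial_a D \neq 0$. This matching is the step I expect to be the main obstacle: one must balance the exponentially large and small factors coming from the slow branches against the non-hyperbolic passage, in a $C^1$ sense in $x$, in order to see $I_H$ emerge as the true leading term and to pin down the constant relating $\partial_x D$ to $I_H'$.

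Granting this reduction, the count follows. The breaking parameter supplies the extra degree of freedom: since $\partial_a D \neq 0$, the implicit function theorem produces a smooth $a = a(\varepsilon)$ with $a(0) = 0$ that places \eqref{classicalHHopf} in the canard regime and realizes the maximal canard $\Gamma^H_{x_{k+1}}$ as a hyperbolic limit cycle $\mathcal{O}^{x_{k+1}}_\varepsilon$. Fixing this $a(\varepsilon)$, the $C^1$-control of $D(\cdot, a(\varepsilon), \varepsilon)$ by $I_H$ transfers each simple zero $x_i$ of $I_H$ into a nearby transversal zero of $D$; transversality is inherited from $I_H'(x_i) \neq 0$ and yields a hyperbolic limit cycle $\mathcal{O}^{x_i}_\varepsilon$ that is Hausdorff close to $\Gamma^H_{x_i}$ as $\varepsilon \to 0$. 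Together with $\mathcal{O}^{x_{k+1}}_\varepsilon$ this produces the claimed $k+1$ hyperbolic periodic orbits, and their mutual separation (the $x_i$ being distinct simple zeros) guarantees that they are isolated.
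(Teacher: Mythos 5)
The paper does not actually prove Theorem \ref{thm-Hopf-simple-zeros}: it is quoted from the literature, with the proof attributed to \cite[Theorem 2]{SDICLE1} (see also \cite{DDR-book-SF}). Your outline is faithful to the strategy used there: a displacement map $D$ on a section transverse to the fast orbits, Fenichel transition maps along the normally hyperbolic branches of $S$ whose logarithmic derivatives integrate the divergence $-F'$ against the slow time $d\tau=-\tfrac{F'(x)}{x}dx$ (hence the integrand of \eqref{SDI-HopfM}), a family blow-up of the slow-fast Hopf point where normal hyperbolicity fails, and the breaking parameter $a$ acting as a regular unfolding parameter so that simple zeros of $I_H$ become transversal zeros of $D$. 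As a roadmap, this is the right one.

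However, you have correctly identified and then skipped the entire mathematical content. The claim that the reduced displacement map is $C^1$-controlled by $I_H$, uniformly for small $\varepsilon$ and uniformly in $a$ --- i.e., that the exponentially strong contraction along the attracting branch, the exponentially strong expansion along the repelling branch, and the non-hyperbolic passage through the blown-up turning point combine so that the location and transversality of the zeros of $D$ are governed by $I_H$, with $\partial_a D\neq 0$ --- is precisely the theorem being cited; establishing it requires the structure theorems for transition maps near the blow-up locus and control of the canard connection in the rescaling chart, and without it the argument is circular. Two smaller points. First, the hyperbolicity of $\mathcal{O}^{x_{k+1}}_\varepsilon$ does not follow from the implicit function theorem in $a$ alone: it comes from $I_H(x_{k+1})\neq 0$ (all $k$ zeros of $I_H$ lie strictly below $x_{k+1}$), which is what forces $\partial_x D(x_{k+1},a(\varepsilon),\varepsilon)\neq 0$; your text attributes it to the wrong source. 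Second, after $a=a(\varepsilon)$ has been fixed by the condition at $x_{k+1}$, you must still argue that the zeros near $x_1,\dots,x_k$ persist for that specific $a(\varepsilon)$, which is exactly where the uniformity in $a$ of the $C^1$-estimate is needed. For the purposes of this paper the honest proof is the citation; your proposal accurately describes what that citation contains but does not replace it.
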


\subsection{The jump breaking mechanism}\label{subsection-jump}\noindent

We say that \eqref{eq-def-slowfast-1} has a (generic) \textit{jump point} (see \cite[Definition 2.3]{DDR-book-SF}) at $(x,y)=(x_{0}, y_{0})$ if 
\begin{equation}\label{eq-def-jump-point}
\begin{split}
    f(x_{0}, y_{0},0) = \frac{\partial f}{\partial x}(x_{0}, y_{0},0) = 0, \ \ \frac{\partial^{2} f}{\partial x^{2}}(x_{0}, y_{0},0) \neq 0,  \\ 
      \frac{\partial f}{\partial y}(x_{0}, y_{0},0) \ne  0 \  \text {  and  } \ g(x_{0}, y_{0},0)\neq 0. \qquad \ \ 
\end{split}
\end{equation}

If $(x,y)=(x_{0}, y_{0})$ is a jump point, we can define the slow dynamics \eqref{SD-final} in the same way as in Section \ref{subsection-Hopf}. However, in this case \eqref{eq-def-jump-point} implies that the vector field in \eqref{SD-final} is unbounded  near $(x,y)=(x_{0}, y_{0})$ and the orbits must jump. The slow dynamics is directed towards the jump point on both branches of $S$ or away from the jump point on both branches.

We say that \eqref{eq-def-slowfast-1} has a \textit{jump connection} if there are two jump points of \eqref{eq-def-slowfast-1} that are connected by a fast orbit and such that the curve of singularities $S$, locally near both jump points, is either concave up or concave down and the directions of the fast and slow dynamics are compatible (see Figure \ref{fig-connections}). The slow dynamics is therefore directed towards one jump point and away from the other one, and the function $g$ must have different sign near the jump points. Such a jump connection can be contained in canard cycles (see e.g. Figure \ref{fig-both-mechanisms}(b)). Of course, system \eqref{eq-def-slowfast-1} can have other types of jump connections, for instance, one jump point is concave up and the other one is concave down (see \cite[Section 6.2]{DDR-book-SF}). They are not considered in this paper.
 
 \begin{figure}[h!]
	\begin{center}
		\includegraphics[width=8.3cm,height=3.1cm]{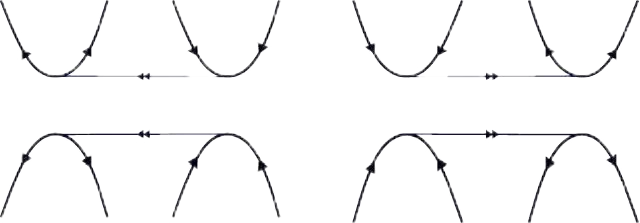}
         \end{center}
	\caption{Jump connections.}
	\label{fig-connections} 
\end{figure}



In the context of slow-fast Li\'enard equations \eqref{classical}, we assume that $F$ depends on a parameter $b\in\mathbb R$ kept close to $0$. We write
\begin{equation}\label{classicaljump}
    \begin{cases}
        \dot{x} = y-F_b(x), \\
        \dot{y} = -\varepsilon^2 x.
    \end{cases}
\end{equation}
            
A contact point $(x,F_b(x))\in S$ with $x\neq 0$ is a (generic) jump point if $F_b''(x)\neq 0$ (see \eqref{eq-def-jump-point}). For $b=0$, we assume that system \eqref{classicaljump} has a jump connection, that is, it has two jump points, which will be denoted by $p_-=(x_-,F_0(x_-))$ and $p_+=(x_+,F_0(x_+))$, and they are connected by a fast orbit $\gamma$. More precisely, the function $F_{0}$ has two minima of Morse type at $x = x_{-} < 0$ and $x = x_{+} > 0$ such that $F_{0}(x_{-}) = F_{0}(x_{+})$. We also suppose that 
\begin{equation}
    \label{jump-assumption}
     xF_0'(x)>0, \  \forall x\in [x_--\rho,x_-)\cup (x_+,x_++\rho],
\end{equation}
for a positive constant $\rho$.

The slow dynamics associated to \eqref{classicaljump} is also given by Equation \eqref{slowdynamics}. However, in this case, using the assumption \eqref{jump-assumption}, the slow dynamics goes towards $p_{+}$ and moves away from $p_{-}$. Moreover, $p_{-}$ (resp. $p_{+}$) is the $\omega$-limit (resp. the $\alpha$-limit) of the fast orbit $\gamma$. See Figure \ref{fig-jump}.

We now define a canard cycle $\Gamma^{J}_{x}$ for $x > x_{+}$ and $b = 0$. It is the union of a fast orbit at height $y = F_{0}(x)$, the fast orbit $\gamma$, the attracting portion of $S$ between $(x,F_{0}(x))\in S$ and $p_{+}\in S$, and the repelling portion of $S$ between $p_{-}\in S$ and $(L_{J}(x),F_{0}(L_{J}(x)))\in S$, where $L_{J}(x)$ is the fast relation function (see Figure \ref{fig-jump}). We have $L_{J}(x) < x_{-}$ and $F_{0}(x) = F_{0}(L_{J}(x))$. We assume that there is a constant  $\rho_{0} > x_{+}$ such that the canard cycle $\Gamma^{J}_{x}$ is well-defined for $x\in (\rho_{0},\min\{x_{+} + \rho, L_{J}^{-1}(x_{-} - \rho)\}]$, for some $\rho > 0$.

\begin{figure}[htb]
	\begin{center}
		\includegraphics[width=9cm,height=6cm]{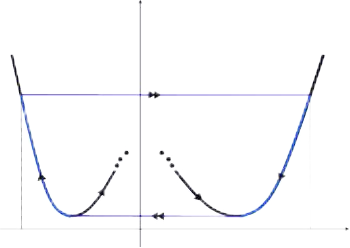}
		{\footnotesize \put(-20,118){$S$}
        \put(-129,95){$\Gamma^J_x$}
        \put(-31,6){$x$}
        \put(-250,3){$L_{J}(x)$}
        \put(2,14){$x$}
        \put(-149,165){$y$}
         \put(-80,15){$p_+$}
         \put(-208,15){$p_-$}
         \put(-159,24){$\gamma$}
}
         \end{center}
	\caption{ A canard cycle $\Gamma^J_x$ created by the jump mechanism, for $b=0$.}
	\label{fig-jump}
\end{figure}


The jump points $p_{\pm}$ persist for all $b$ close to $0$, and we denote them by $p_{\pm}(b)$ with $p_{\pm}(0) = p_{\pm}$. We define
$$h(b) := F_{b}(x_{+}(b)) - F_{b}(x_{-}(b)).$$

Clearly, $h(0) = 0$. We assume that $b$ is a regular parameter for the jump breaking mechanism, that is, $h'(0)\neq 0$.

The slow divergence integral related to $\Gamma^{J}_{x}$ is given by
 \begin{equation}
     \label{SDI-JumpM}
     I_{J}(x) := \int_{x}^{x_{+}}\frac{(F_{0}'(s))^{2}}{s}ds + \int_{x_{-}}^{L_{J}(x)}\frac{(F_{0}'(s))^{2}}{s}ds,
 \end{equation}
 for $x\in (\rho_{0},\min\{x_{+} + \rho, L_{J}^{-1}(x_{-} - \rho)\}]$.

A proof of the following result (similar to Theorem \ref{thm-Hopf-simple-zeros}) can be found in \cite{Dbalanced}.
\begin{theorem}
    \label{thm-Jump-simple-zeros}
  Denote system \eqref{classicaljump} by $X_{\varepsilon,b}$ and  
    suppose that $I_J(x)$ defined by \eqref{SDI-JumpM} has exactly $k$ simple zeros. Then there is a smooth function $b=b(\varepsilon)$ with $b(0) = 0$, so that $X_{\varepsilon,b(\varepsilon)}$
has $k + 1$ periodic orbits for each $\varepsilon>0$ sufficiently small. All of them are isolated, hyperbolic and Hausdorff close to canard cycles $\Gamma^{J}_{x}$ as $\varepsilon\to 0$.
\end{theorem}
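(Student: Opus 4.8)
The plan is to mirror the proof of Theorem \ref{thm-Hopf-simple-zeros} (given in \cite{SDICLE1}), replacing the single slow--fast Hopf passage by the passage through the jump connection of Figure \ref{fig-jump}, and to let the slow divergence integral \eqref{SDI-JumpM} play the same controlling role it plays for $I_H$. Throughout I would work with the canard family $\Gamma^J_x$ parametrized by the layer variable $x \in (\rho_0, \min\{x_+ + \rho, L_J^{-1}(x_- - \rho)\}]$, and treat $b$ as the regular breaking parameter, so that $h(b) = F_b(x_+(b)) - F_b(x_-(b))$ satisfies $h(0) = 0$ and $h'(0) \neq 0$.

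First I would set up a Poincar\'e return map. Choose a section $\Sigma$ transverse to the upper fast orbits of the family $\Gamma^J_x$, parametrized by $x$, and define the first return map $P_{\varepsilon,b}\colon \Sigma \to \Sigma$ for $\varepsilon > 0$ small; periodic orbits of $X_{\varepsilon,b}$ near $\Gamma^J_x$ then correspond to zeros of the displacement $\delta(x, b, \varepsilon) := P_{\varepsilon,b}(x) - x$. To compute $P_{\varepsilon,b}$ I would decompose the return into four pieces following the cycle in Figure \ref{fig-jump}: (i) the Fenichel transition along the normally attracting branch from $\Sigma$ down to a neighborhood of $p_+(b)$; (ii) the passage through the generic jump point $p_+(b)$; (iii) the Fenichel transition along the normally repelling branch from a neighborhood of $p_-(b)$ up to the exit; and (iv) the passage through $p_-(b)$, together with the fast connection $\gamma$ and the top fast orbit. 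For pieces (ii) and (iv) I would invoke the blow-up normal form for generic jump points from \cite[Section 3]{DDR-book-SF}, which provides $C^\infty$ transition maps with known asymptotics. For the slow pieces (i) and (iii), normal hyperbolicity under assumption \eqref{jump-assumption} yields transition maps whose contraction, respectively expansion, rate is governed by the exponential of the divergence integrated in slow time; since the divergence is $-F_0'(x)$ and $d\tau = -\tfrac{F_0'(x)}{x}\,dx$, assembling the two contributions produces exactly the two integrals appearing in \eqref{SDI-JumpM}, i.e.\ a net factor comparable to $\exp\!\big(I_J(x)/\varepsilon^2\big)$.

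The heart of the argument is to show that, after dividing $\delta$ by a strictly positive smooth factor, it acquires the balanced-canard structure of \cite{Dbalanced}: the breaking term contributes $h(b)$ at leading order (this is where $h'(0)\neq 0$ enters, allowing me to unfold the connection), while the net slow contraction/expansion contributes the factor $\exp\!\big(I_J(x)/\varepsilon^2\big)$. Following \cite{Dbalanced}, I would then use $h'(0)\neq 0$ and the implicit function theorem to select $b = b(\varepsilon)$, with $b(0)=0$, that anchors a canard limit cycle at a reference layer $x_{k+1}$ chosen beyond the last zero of $I_J$ (playing the role of $x_{k+1}$ in Theorem \ref{thm-Hopf-simple-zeros}). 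With this choice of $b(\varepsilon)$, each simple zero $x_i$ of $I_J$ forces a transverse sign change of the prepared displacement map on the corresponding layer interval, hence a zero of $\delta(\cdot, b(\varepsilon), \varepsilon)$; simplicity of the zero of $I_J$ makes this zero simple in $x$, so the associated periodic orbit is isolated and hyperbolic, and Hausdorff close to $\Gamma^J_{x_i}$ as $\varepsilon\to 0$. Counting the $k$ cycles coming from the simple zeros together with the anchored cycle near $x_{k+1}$ yields the claimed $k+1$ hyperbolic limit cycles.

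The main obstacle is steps (ii)--(iii): establishing, uniformly in the layer variable $x$ over the relevant compact interval, that \eqref{SDI-JumpM} is the sharp leading-order quantity controlling $\delta$, with all remainders genuinely of higher order, so that simple zeros of $I_J$ survive as transverse zeros of the displacement map. This requires the delicate matching of the Fenichel estimates along the hyperbolic branches with the blow-up transition maps at the two generic jump points $p_\pm(b)$ -- precisely the entry--exit analysis developed in \cite[Chapter 5]{DDR-book-SF} and \cite{Dbalanced}. Since those references supply the needed local normal forms and uniform asymptotics, I would quote them for the passages through $p_\pm(b)$ and concentrate the remaining work on assembling the four transition maps into $P_{\varepsilon,b}$ and on the counting argument above.
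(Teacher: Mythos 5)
The paper gives no proof of Theorem \ref{thm-Jump-simple-zeros}; it simply refers to \cite{Dbalanced}, and your proposal is a faithful reconstruction of exactly that argument (return-map decomposition into Fenichel transitions along the normally hyperbolic branches and blow-up passages at $p_{\pm}(b)$, the slow divergence integral $I_J$ as the leading exponent $\exp(I_J(x)/\varepsilon^2)$ of the derivative of the return map, and the regular breaking parameter $h'(0)\neq 0$ used with the implicit function theorem to anchor one cycle at $x_{k+1}$). The only imprecision is in the counting step: simple zeros of $I_J$ mark transversal crossings of the derivative of the return map through $1$ (equivalently, nondegenerate critical points of the prepared displacement), not sign changes of the displacement itself, so the $k+1$ hyperbolic cycles arise because a function with $k$ nondegenerate critical points meets a suitably chosen level $h(b(\varepsilon))$ in $k+1$ transversal points --- but this refinement is precisely the content of \cite{Dbalanced}, to which both you and the authors defer.
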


\subsection{Regularized PWL systems and generic breaking mechanisms}\label{sec-reg-mech}\noindent

In this section, we discuss how the breaking mechanisms studied in Sections \ref{subsection-Hopf} and \ref{subsection-jump} appear in regularizations of PWL vector fields (or more general piecewise smooth vector fields). The regularized system \eqref{LINREG-rescaled} is a special case of \eqref{eq-def-slowfast-1} with $l=1$ and  
 \begin{equation}
     \label{function-f}
     f(x,y,\varepsilon)=\displaystyle\frac{X_{1}(\varepsilon x,y) + Y_{1}(\varepsilon x,y)}{2} + \varphi(x)\frac{X_{1}(\varepsilon x,y) - Y_{1}(\varepsilon x,y)}{2},
 \end{equation}
and 
 \begin{equation}
     \label{function-g}
     g(x,y,\varepsilon)=\displaystyle\frac{X_{2}(\varepsilon x,y) + Y_{2}(\varepsilon x,y)}{2} + \varphi(x)\frac{X_{2}(\varepsilon x,y) - Y_{2}(\varepsilon x,y)}{2}.
 \end{equation}
 
Suppose that $(x,y)=(x_{0}, y_{0})$ is a slow-fast Hopf point or a jump point of \eqref{LINREG-rescaled}. Then Equation \eqref{function-f} implies that $\varphi'(x_{0}) = 0$ and $\varphi''(x_{0}) \neq 0$. Indeed, conditions $\frac{\partial f}{\partial x}(x_{0}, y_{0},0) = 0$ and $\frac{\partial^{2} f}{\partial x^{2}}(x_{0}, y_{0},0) \neq 0$ can be written as
$$\varphi'(x_{0})\left(X_1(0,y_{0})-Y_1(0,y_{0})\right)=0 \text{ and } \varphi''(x_{0})\left(X_1(0,y_{0})-Y_1(0,y_{0})\right)\ne 0. $$

This is equivalent to 
$$\varphi'(x_{0})=0, \ \varphi''(x_{0})\ne 0 \text{ and } X_1(0,y_{0})-Y_1(0,y_{0})\ne 0.$$

For the chosen $l = 1$ and functions $f$ and $g$, we get the following result.
\begin{proposition}\label{prop-properties}
    In Equation \eqref{eq-def-slowfast-1}, assume $l = 1$ and $f$, $g$ defined in \eqref{function-f} and \eqref{function-g}, respectively. Then the following statements are true.
    \begin{itemize}
        \item[(1)] For $x_{0}\in (-1,1)$, the point $(x_{0},y_{0})$ is not a slow-fast Hopf point. 
        \item[(2)] If $(x_{0},y_{0})$ and $(x_{1},y_{0})$ are generic jump points with $x_{0,1}\in (-1,1)$, then a jump connection is not possible.
    \end{itemize}
\end{proposition}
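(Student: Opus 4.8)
The plan is to exploit the fact that, because $X=(X_1,X_2)$ and $Y=(Y_1,Y_2)$ are affine, the dependence of $f$ and $g$ on $x$ at $\varepsilon=0$ is channelled entirely through $\varphi(x)$. Setting $\varepsilon=0$ in \eqref{function-f} and \eqref{function-g} I would write
\[
f(x,y,0)=A(y)+\varphi(x)B(y),\qquad g(x,y,0)=C(y)+\varphi(x)D(y),
\]
with $A(y)=\tfrac12\bigl(X_1(0,y)+Y_1(0,y)\bigr)$, $B(y)=\tfrac12\bigl(X_1(0,y)-Y_1(0,y)\bigr)$ and $C,D$ the analogous combinations built from $X_2,Y_2$. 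From this I read off $\partial_x f=\varphi'(x)B(y)$, $\partial_x^2 f=\varphi''(x)B(y)$ and $\partial_x g=\varphi'(x)D(y)$. As already recorded before the proposition, the contact conditions $\partial_x f(x_0,y_0,0)=0$ and $\partial_x^2 f(x_0,y_0,0)\ne0$ are then equivalent to $\varphi'(x_0)=0$, $\varphi''(x_0)\ne0$ and $B(y_0)\ne0$.

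For part (1), I would just test the last inequality of \eqref{eq-def-hopf-turning-point}. Since any slow-fast Hopf point with $x_0\in(-1,1)$ forces $\varphi'(x_0)=0$, the factorization gives $\partial_x g(x_0,y_0,0)=\varphi'(x_0)D(y_0)=0$. Hence the product $\partial_x g(x_0,y_0,0)\cdot\partial_y f(x_0,y_0,0)$ vanishes and cannot be strictly negative, which contradicts \eqref{eq-def-hopf-turning-point}. Thus $(x_0,y_0)$ is not a slow-fast Hopf point.

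For part (2), I would first recall that fast orbits of \eqref{eq-def-slowfast-1} at $\varepsilon=0$ are horizontal, so two jump points joined by a fast orbit necessarily share the same height; this is precisely why they are written $(x_0,y_0)$ and $(x_1,y_0)$ with $x_0\ne x_1$. Each is a contact point, so $B(y_0)\ne0$, and evaluating $f(x_i,y_0,0)=A(y_0)+\varphi(x_i)B(y_0)=0$ for $i=0,1$ and subtracting gives $\bigl(\varphi(x_0)-\varphi(x_1)\bigr)B(y_0)=0$, hence $\varphi(x_0)=\varphi(x_1)$. Substituting back, $g(x_0,y_0,0)=C(y_0)+\varphi(x_0)D(y_0)=C(y_0)+\varphi(x_1)D(y_0)=g(x_1,y_0,0)$, so $g$ takes one and the same nonzero value, and in particular has the same sign, at the two jump points. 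A jump connection of the type considered in Section \ref{subsection-jump} requires the slow dynamics to point towards one jump point and away from the other, which forces $g$ to take opposite signs near the two points; this contradiction rules out the connection.

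The computations involved are all elementary, so the only genuine obstacle is conceptual: one must isolate the structural fact that at $\varepsilon=0$ the $x$-dependence of both $f$ and $g$ factors through $\varphi(x)$, and then combine it with the two geometric inputs — the vanishing of $\varphi'$ at a contact point and the common height $y_0$ of points joined by a horizontal fast orbit. Once this is in place, part (1) reduces to the vanishing of $\partial_x g$ and part (2) to the equality of the two $g$-values. The one point needing care is the justification of $B(y_0)\ne0$ from the nondegeneracy $\partial_x^2 f(x_0,y_0,0)\ne0$, since this is exactly what legitimizes dividing out $B(y_0)$ in the subtraction step of part (2).
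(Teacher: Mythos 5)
Your proposal is correct and follows essentially the same route as the paper's own proof: both rest on the observation that at $\varepsilon=0$ the $x$-dependence of $f$ and $g$ factors through $\varphi(x)$, so that $\varphi'(x_0)=0$ at a contact point kills $\partial_x g$ (part (1)), and $f(x_i,y_0,0)=0$ with $B(y_0)\neq 0$ forces $\varphi(x_0)=\varphi(x_1)$ and hence $g(x_0,y_0,0)=g(x_1,y_0,0)$, contradicting the sign change of $g$ required by a jump connection (part (2)). The only cosmetic difference is that you phrase the contradiction in (1) via the vanishing of the product $\partial_x g\cdot\partial_y f$ while the paper phrases it as $\varphi'(x_0)$ being simultaneously zero and nonzero; these are the same argument.
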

\begin{proof}
(1). Suppose that $(x_{0},y_{0})$ with $x_{0}\in (-1,1)$ is a slow-fast Hopf point, that is, the assumption \eqref{eq-def-hopf-turning-point} is satisfied. Conditions $\frac{\partial f}{\partial x}(x_{0}, y_{0},0) = 0$ and $\frac{\partial^{2} f}{\partial x^{2}}(x_{0}, y_{0},0) \neq 0$ imply $\varphi'(x_{0}) = 0$, and $\frac{\partial g}{\partial x}(x_{0}, y_{0},0)\ne 0$ and \eqref{function-g} imply $\varphi'(x_{0}) \neq 0$, which is a contradiction.

(2). Suppose that there is a jump connection between $(x_{0}, y_{0})$ and $(x_1,y_{0})$, that is, the assumption \eqref{eq-def-jump-point} is satisfied for both points and the orientation of the slow dynamics is compatible. Since $f(x_i,y_{0},0)=\frac{\partial f}{\partial x}(x_i,y_{0},0) = 0$ and $\frac{\partial^{2} f}{\partial x^{2}}(x_i,y_{0},0) \neq 0$, for $i=0,1$, it follows that  
$$X_1(0,y_{0})-Y_1(0,y_{0})\ne 0, \quad \text{ and } \quad \varphi(x_{0})=\varphi(x_1)=-\frac{X_1(0,y_{0})+Y_1(0,y_{0})}{X_1(0,y_{0})-Y_1(0,y_{0})}.$$

However, $\varphi(x_{0})=\varphi(x_1)$ and \eqref{function-g} imply $g(x_{0}, y_{0},0)=g(x_1,y_{0},0)$, therefore the orientation of the slow dynamics is not compatible, because the function $g$ has the same sign when evaluated in such points. Therefore we do not have a jump connection. 
\end{proof}

\begin{remark}
We cannot exclude the possibility of having slow-fast Hopf points or jump connections in \eqref{LINREG-rescaled}. Proposition \ref{prop-properties} only suggests that $l=1$ is not a good choice. As we shall see in the following, they can occur when $g(x,y,\varepsilon)=\varepsilon\tilde g(x,y,\varepsilon)$ for some smooth function $\tilde g$ (recall that $g $ is defined in \eqref{function-g}).
\end{remark}

Consider a PWL vector field of the form
\begin{equation}\label{eq-pwl-model}
Z(x,y) = \left\{
\begin{array}{rclc}
   X(x,y) & = & (a_{1} + b_{1} x + y, b_{2}x), & x > 0, \\ 
   Y(x,y) & = & (\alpha_{1} + \beta_{1} x + y, \beta_{2} x), & x < 0.
\end{array}
\right.
\end{equation}

A linear regularization \eqref{LINREG-rescaled} of $Z$ leads to the slow-fast Li\'enard equation
\begin{equation}\label{Lienard-generalized}
\left\{
\begin{array}{rcl}
   \dot{x} & = & y + \displaystyle\frac{1}{2}\Big( a_1+\alpha_1+(b_1+\beta_1)\varepsilon x + \big(a_1-\alpha_1+(b_1-\beta_1)\varepsilon x\big)\varphi(x)\Big), \\ 
   \dot{y} & = &\varepsilon^{2} \displaystyle\frac{x}{2}\big(b_2+\beta_2 + (b_2-\beta_2)\varphi(x)\big).
\end{array}
\right.
\end{equation}

The system \eqref{Lienard-generalized} is a special case of \eqref{eq-def-slowfast-1} with $l=2$. For $\varepsilon=0$, system \eqref{Lienard-generalized} has the curve of singularities $\{y=F(x):=-\frac{1}{2}\big( a_1+\alpha_1 +(a_1-\alpha_1)\varphi(x)\big)\}$. From \eqref{SD-final}, it follows that the slow dynamics is given by
\begin{equation}
    \label{SD-Lienard-generalized}
    x'=-\frac{x\big(b_2+\beta_2 + (b_2-\beta_2)\varphi(x)\big)}{(a_1-\alpha_1)\varphi'(x)}.\nonumber
\end{equation}

The main advantage of working with the system \eqref{Lienard-generalized} is that slow-fast Hopf points and jump connections can occur in \eqref{Lienard-generalized}. More precisely, we have the following result.
\begin{lemma}\label{lemma-classificatio}
 Consider system \eqref{Lienard-generalized}. The following statements hold.   
 \begin{itemize}
     \item[(1)] The point $(x_{0}, y_{0})=(0,F(0))$ is a slow-fast Hopf point if, and only if the equation \eqref{Lienard-generalized} satisfies 
     $$\varphi'(0) = 0, \quad \varphi''(0)\neq 0, \quad a_{1} \neq \alpha_{1} \quad  \text{and} \quad  b_{2} + \beta_{2} + (b_{2} - \beta_{2})\varphi(0) < 0.$$
     \item[(2)]  System \eqref{Lienard-generalized} must satisfy the following conditions in order to generate a jump connection, with jump points located at $(x_{0}, y_{0})$ and $(x_1,y_{0})$, $x_{0}<x_1$: There exists a regular orbit connecting $(x_{0}, y_{0})$ and $(x_1,y_{0})$ for $\varepsilon=0$, 
     \begin{align*}
         & \qquad a_1\ne \alpha_1, \ \  y_{0}=F(x_i), \ \ \varphi'(x_i)=0, \ \ \varphi''(x_i)\ne 0, \ \ i=0,1, \\
         &  x_{0}\big(b_2+\beta_2 + (b_2-\beta_2)\varphi(x_{0})\big)> 0, \ \ x_1\big(b_2+\beta_2 + (b_2-\beta_2)\varphi(x_1)\big)< 0, 
     \end{align*} 
 \end{itemize}
 and $\varphi''(x_0)$ and $\varphi''(x_1)$ have the same sign.
\end{lemma}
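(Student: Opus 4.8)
The plan is to read off the functions $f$ and $g$ from \eqref{Lienard-generalized}, viewing it as the special case of \eqref{eq-def-slowfast-1} with $l=2$, namely
\[
f(x,y,0)=y-F(x),\qquad g(x,y,0)=\tfrac{x}{2}\bigl(b_2+\beta_2+(b_2-\beta_2)\varphi(x)\bigr),
\]
where $F(x)=-\tfrac12\bigl(a_1+\alpha_1+(a_1-\alpha_1)\varphi(x)\bigr)$, and then to check the defining conditions \eqref{eq-def-hopf-turning-point} and \eqref{eq-def-jump-point} directly. The derivatives I will need are $\frac{\partial f}{\partial x}(x,y,0)=\tfrac12(a_1-\alpha_1)\varphi'(x)$, $\frac{\partial^2 f}{\partial x^2}(x,y,0)=\tfrac12(a_1-\alpha_1)\varphi''(x)$, $\frac{\partial f}{\partial y}\equiv 1$, and $\frac{\partial g}{\partial x}(0,y,0)=\tfrac12\bigl(b_2+\beta_2+(b_2-\beta_2)\varphi(0)\bigr)$.

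For statement (1), I evaluate \eqref{eq-def-hopf-turning-point} at $(0,F(0))$. The condition $f=0$ holds because the point lies on $S$, and $g(0,F(0),0)=0$ holds automatically thanks to the factor $x$ in $g$; this is precisely why $x_0=0$ is admissible as a Hopf point. The condition $\frac{\partial^2 f}{\partial x^2}\neq 0$ forces $a_1\neq\alpha_1$ and $\varphi''(0)\neq 0$, and, granting $a_1\neq\alpha_1$, the condition $\frac{\partial f}{\partial x}=0$ becomes $\varphi'(0)=0$; finally $\frac{\partial g}{\partial x}\cdot\frac{\partial f}{\partial y}<0$ reduces to $b_2+\beta_2+(b_2-\beta_2)\varphi(0)<0$ since $\frac{\partial f}{\partial y}=1>0$. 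Reading these equivalences in both directions yields the stated \emph{if and only if}.

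For statement (2), I apply \eqref{eq-def-jump-point} at each point $(x_i,y_0)$, $i=0,1$: the equality $f=0$ gives $y_0=F(x_i)$; exactly as in (1), the pair $\frac{\partial f}{\partial x}=0$, $\frac{\partial^2 f}{\partial x^2}\neq 0$ gives $a_1\neq\alpha_1$, $\varphi'(x_i)=0$, $\varphi''(x_i)\neq 0$; the condition $\frac{\partial f}{\partial y}=1\neq 0$ is automatic; and $g(x_i,y_0,0)\neq 0$ forces $x_i\neq 0$. The common-concavity requirement of a jump connection is transferred through $F''(x_i)=-\tfrac12(a_1-\alpha_1)\varphi''(x_i)$: since $a_1-\alpha_1$ is a common nonzero factor, $S$ being concave up (or down) at both points is equivalent to $\varphi''(x_0)$ and $\varphi''(x_1)$ having the same sign.

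The heart of the argument, and the step I expect to be the most delicate, is extracting the two definite inequalities on $g$ from the compatibility of the slow and fast dynamics. First, the sign of $y_0-F(x)$ on $(x_0,x_1)$—fixed by the common concavity—determines the direction of the connecting fast orbit, hence which of $(x_0,y_0)$, $(x_1,y_0)$ is its $\alpha$-limit and which its $\omega$-limit. A jump connection requires the slow flow to be directed toward the $\alpha$-limit point and away from the $\omega$-limit point. Using the reduced vector field \eqref{SD-final}, written here as $x'=-g(x,F(x),0)\big/\bigl(\tfrac12(a_1-\alpha_1)\varphi'(x)\bigr)$, the sign of $x'$ near each jump point is governed by the sign of $g$ together with the attracting/repelling nature of the branch actually used, i.e.\ the sign of $\frac{\partial f}{\partial x}=-F'$ there. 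Carrying out this sign count—being careful to verify that the two concavity configurations lead to the same outcome—yields $g(x_0,y_0,0)>0$ and $g(x_1,y_0,0)<0$, which upon substituting $g(x_i,y_0,0)=\tfrac{x_i}{2}\bigl(b_2+\beta_2+(b_2-\beta_2)\varphi(x_i)\bigr)$ become precisely the two stated inequalities.
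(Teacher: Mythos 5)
Your proposal is correct and follows exactly the route the paper intends: the paper's proof is a one-line appeal to \eqref{eq-def-hopf-turning-point}, \eqref{eq-def-jump-point}, \eqref{Lienard-generalized} and the definition of a jump connection, and your computation of $f$, $g$ and their derivatives, together with the sign analysis of the slow flow relative to the $\alpha$- and $\omega$-limits of the connecting fast orbit, is precisely the omitted verification. All the individual claims check out, including that both concavity configurations yield $g(x_0,y_0,0)>0$ and $g(x_1,y_0,0)<0$.
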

\begin{proof}
    This follows easily from \eqref{eq-def-hopf-turning-point}, \eqref{eq-def-jump-point}, \eqref{Lienard-generalized} and the definition of a jump connection. 
\end{proof}



\section{Proof of the main results}\label{section-proof-all}\noindent

In this section, we prove Theorems \ref{thm-main-Hopf} and \ref{thm-main-jump}. We point out that it is more convenient to deal with classical Li\'enard equations after regularization. Therefore, in the PWL system \eqref {eq-pwl-model} we set $b_{2} = \beta_{2} = -1$, and then we obtain $\dot y = -\varepsilon^{2} x$ in \eqref{Lienard-generalized}. We also assume that $b_{1} = \beta_{1} = 0$ such that the $x$-component of \eqref{Lienard-generalized} is independent of $\varepsilon$. Theorem \ref{thm-Hopf-simple-zeros} (resp. Theorem \ref{thm-Jump-simple-zeros}), stated for the classical Li\'enard equations, will play a crucial role in the proof of Theorem \ref{thm-main-Hopf} (resp. Theorem \ref{thm-main-jump}).

Finally, we remark that in this paper we do not focus on the description of the phase portraits for different values of the coefficients of $X$ and $Y$ in Equation \eqref{eq-pwl-model} because we do not use such a description in the proof of Theorems \ref{thm-main-Hopf} and \ref{thm-main-jump}. Recall that we are only interested in the limit cycles inside the regularization stripe. 

For the sake of readability, we rewrite system \eqref{LINREG-rescaled}:
\begin{equation}\label{LINREG-rescaled-rewrite}
\left\{
\begin{array}{rcl}
   \dot{x} & = & \displaystyle\frac{X_{1}(\varepsilon x,y) + Y_{1}(\varepsilon x,y)}{2} + \varphi(x)\frac{X_{1}(\varepsilon x,y) - Y_{1}(\varepsilon x,y)}{2}, \\ 
   \dot{y} & = & \varepsilon\left(\displaystyle\frac{X_{2}(\varepsilon x,y) + Y_{2}(\varepsilon x,y)}{2} + \varphi(x)\frac{X_{2}(\varepsilon x,y) - Y_{2}(\varepsilon x,y)}{2}\right).
\end{array}
\right.
\end{equation}

\subsection{Proof of Theorem \ref{thm-main-Hopf}}\label{section-proof-Hopf-main}\noindent

Define the function
\begin{equation}\label{F-Hopf-special}
F(x) := \frac{1}{2}x^{2} + \delta F_{o}(x),
\end{equation}
where $F_{o}$ is an odd function satisfying $F'_{o}(0) = 0$ and $\delta$ is a parameter kept close to zero. Notice that assumption \eqref{Hopf-assumption} is satisfied in any fixed segment $[-\rho,\rho]$ by taking $\delta$ small enough. By \cite[Proposition 1]{SDICLE1}, the slow divergence integral \eqref{SDI-HopfM} can be written as 
$$I_H(x) = -2\delta\left(F_{o}(x)  + O(\delta)\right).$$ 
Indeed, following the notation of \cite[Proposition 1]{SDICLE1}, in our case we have $F_{e}(x) = \frac{1}{2}x^{2}$ and $f_{e} = 1$. This implies that simple zeros of $F_o$ persist as simple zeros of $I_H$, for small $\delta\neq 0$. 

Define the PWL vector field
\begin{equation}\label{eq-pwl-proof-hopf}
Z(x,y,\alpha) = \left\{
\begin{array}{rclc}
   X(x,y,\alpha) & = & (-3 + y, \alpha - x), & x > 0, \\ 
   Y(x,y,\alpha) & = & (-1 + y, \alpha - x), & x < 0,
\end{array}
\right.
\end{equation}
where $\alpha$ is a breaking parameter kept close to zero. The points $(\alpha,3)$ and $(\alpha,1)$ are linear centers of $X$ and $Y$, respectively. Define $\psi(x) := F(x) - 2$, where $F$ is given in \eqref{F-Hopf-special}. For $\alpha =\varepsilon a$, with $a$ close to zero, consider the slow-fast system 
\begin{equation}\label{eq-Hopf-important}
\left\{
\begin{array}{rcl}
   \dot{x} & = & y - (\psi(x)+2), \\ 
   \dot{y} & = & \varepsilon^2( a-x).
\end{array}
\right.
\end{equation}

Recall that \eqref{eq-Hopf-important} is obtained from \eqref{LINREG-rescaled-rewrite}, with $\varphi=\psi$, but it is not yet a linear regularization of the PWL system \eqref{eq-pwl-proof-hopf} because $\psi$ is not a transition function. However, $\psi$ will be important for the construction of the transition function.

Let $k>0$ be an integer. Take 
$$F_{o}(x) = x^{3}(x^{2}-\tilde{x}_{1}^{2})\cdot \dots \cdot (x^{2} - \tilde{x}_{k}^{2}),$$
with $0 < \tilde{x}_{1} < \cdots <\tilde{x}_k$. We can assume that the simple zeros $\tilde{x}_1,\dots,\tilde{x}_{k}$ of $F_{o}$ are small enough and fixed so that the canard cycle $\Gamma^{H}_{\tilde{x}_i}$, associated with \eqref{eq-Hopf-important} for $a = 0$, is contained in the stripe $\{-1 < x < 1\}$ (or, equivalently, $\tilde{x}_{i} < 1$ and $L_{H}(\tilde{x}_{i}) > - 1$), for each $i=1,\dots,k$, and for each $\delta$ small. It is clear from the construction of $\psi$ and $F_{o}$ that
$$
\psi(0) = -2, \quad \psi'(0)=0, \quad \psi''(0) > 0, \quad \psi(-1) < -1, \quad \psi(1) < 1, 
$$
and $\psi'(x) > 0$ for $x\in (0,1]$ and $\psi'(x) < 0$ for $x\in [-1,0)$, for each $\delta$ sufficiently small. Now, we fix a $\delta\ne 0$ sufficiently small so that the above properties are satisfied and such that the slow divergence integral $I_H(x)$ has $k$ simple zeros $0<x_1\cdots<x_k$, with $x_i$ close to $\tilde{x}_i$, $ x_i<1$ and $L_{H}( x_i)>-1$, $i=1,\dots, k$.

We choose $x_{k+1} > x_{k}$ satisfying $x_{k+1}<1$ and $L_{H}(x_{k+1}) > -1$. From Theorem \ref{thm-Hopf-simple-zeros} it follows that there exists a smooth function $a = a_k(\varepsilon)$ with $a_k(0) = 0$, so that system \eqref{eq-Hopf-important}, with $a = a_k(\varepsilon)$, has $k + 1$ hyperbolic limit cycles $\mathcal{O}^{x_i}_\varepsilon$, with $i=1,\dots,k+1$, for each $\varepsilon>0$ small enough. Moreover, the limit cycle $\mathcal{O}^{x_i}_\varepsilon$ converges in the Hausdorff sense to the canard cycle $\Gamma^H_{x_i}$ as $\varepsilon\to 0$. This implies that there is a constant $\rho\in (0,1)$, with $\rho$ close to $1$, such that the limit cycle $\mathcal{O}^{x_i}_\varepsilon$ is contained in the stripe $\{-\rho<x<\rho\}$, for each $i=1,\dots,k+1$ and for each $\varepsilon>0$ sufficiently small. 
\begin{figure}[htb]
	\begin{center}
		\includegraphics[width=6cm,height=4cm]{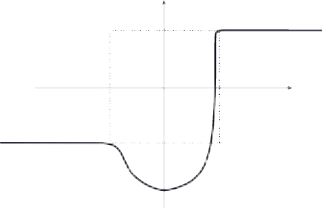}
		{\footnotesize 
        \put(-14,60){$x$}
        \put(-90,115){$y$}
}
         \end{center}
	\caption{ A transition function $\varphi_k$ with one critical point in $(-1,1)$, located at $x=0$.}
	\label{fig-transition-hopf}
\end{figure}

\subsubsection{Construction of the transition function}\noindent
\label{section-construct-trans}

Notice that the polynomial $\psi:\mathbb R\to \mathbb R$, defined above, is not a transition function. Our goal now is to construct a transition function $\varphi_{k}$ satisfying $\varphi_{k}(x) \equiv \psi(x)$, for all $x\in [-\rho,\rho]$, and such that $\varphi_{k}$ has no additional critical points, besides the one at $ x = 0$ (see Figure \ref{fig-transition-hopf}). Indeed, consider the \textit{cut-off functions} $A,B:\mathbb R\to\mathbb R$ (see \cite{Lovett}) satisfying
\begin{enumerate}
    \item Both $A,B$ are smooth;
    \item $A(x)=0$ for $x\le \rho$, $A(x)=1$ for $x\ge 1$, and $A'(x)>0$ for $x\in(\rho,1)$;
    \item $B(x)=0$ for $x\ge -\rho$, $B(x)=1$ for $x\le -1$, and $B'(x)<0$ for $x\in(-1,-\rho)$.
\end{enumerate}

Define  
$$\varphi_{k}(x) := A(x)\left(1-B(x)\right) - B(x) + \psi(x)\left(1-A(x)\right)\left(1-B(x)\right).$$

We claim that $\varphi_k(x)$ is a transition function. It can be checked that 
\begin{equation}
\varphi_k(x)=\left\{
\begin{array}{rcl}
   -1,&& \ x\le -1, \\ 
   -B(x)+\psi(x)\left(1-B(x)\right), && \ x\in(-1,-\rho),\\
   \psi(x), && \ x\in [-\rho,\rho], \\
   A(x)+\psi(x)\left(1-A(x)\right), && \ x\in(\rho,1), \\
   1, && \ x\ge 1.\nonumber
\end{array}
\right.
\end{equation}

It remains to prove that $\varphi_k'(x) < 0$, for $ x\in(-1,-\rho)$, and $\varphi_k'(x) > 0$, for $x\in(\rho,1)$. Let us first consider the interval $ x\in(-1,-\rho)$. Since $0 < B(x) < 1$, $B'(x)<0$, $\psi(x)<-1$ and $\psi'(x)<0$, for all $ x\in(-1,-\rho)$, we have
$$\varphi_k'(x)=-B'(x)\left(1+\psi(x)\right)+\psi'(x)\left(1-B(x)\right)<0,$$
for all $ x\in(-1,-\rho)$. Similarly, since $0<A(x)<1$, $A'(x)>0$, $\psi(x)<1$ and $\psi'(x)>0$, for all $ x\in(\rho,1)$, we have
$$\varphi_k'(x)=A'(x)\left(1-\psi(x)\right)+\psi'(x)\left(1-A(x)\right)>0,$$
for each $ x\in(\rho,1)$. Thus, $\varphi_k$ is a transition function with precisely one critical point in the interval $(-1,1)$, which is $x = 0$.

Consider the slow-fast system
 \begin{equation}\label{eq-Hopf-important-varph_k}
\left\{
\begin{array}{rcl}
   \dot{x} & = & y - (\varphi_k(x)+2), \\ 
   \dot{y} & = & \varepsilon^2( a-x).
\end{array}
\right.
\end{equation}

Notice that the vector fields in \eqref{eq-Hopf-important} and \eqref{eq-Hopf-important-varph_k} are equal in the stripe $\{-\rho<x<\rho\}$, and $\mathcal{O}^{x_i}_\varepsilon$ is therefore a (hyperbolic) limit cycle of \eqref{eq-Hopf-important-varph_k} with $a=a_k(\varepsilon)$, for each $i=1,\dots,k+1$ and for each $\varepsilon>0$ small enough. We conclude that the {$\varphi_k$-linear regularization \eqref{LINREG}, with $X,Y$ defined above, and $\alpha=\varepsilon a_k(\varepsilon)$, has at least $k+1$ hyperbolic limit cycles, for each $\varepsilon>0$ small enough. This completes the proof of Theorem \ref{thm-main-Hopf}.

\begin{remark}\label{remark-Cpm}
In the proof of Theorem \ref{thm-main-Hopf}, we used the PWL system \eqref{eq-pwl-proof-hopf}. Consider the following generalization of \eqref{eq-pwl-proof-hopf}:
\begin{equation}\label{eq-pwl-proof-hopf-2}
Z(x,y,\alpha) = \left\{
\begin{array}{rclc}
   X(x,y,\alpha) & = & (-c_{+} + y, \alpha - x), & x > 0, \\ 
   Y(x,y,\alpha) & = & (-c_{-} + y, \alpha - x), & x < 0,
\end{array}
\right.
\end{equation}
where $\alpha$ is a breaking parameter kept close to zero and $c_\pm\in \mathbb R$. The points $(\alpha,c_{+})$ and $(\alpha,c_{-})$ are linear centers of $X$ and $Y$, respectively. The PWL system \eqref{eq-pwl-proof-hopf} is a special case of \eqref{eq-pwl-proof-hopf-2} with $c_{+} = 3$ and $c_{-} = 1$. One can reproduce a completely analogous proof of Theorem \ref{thm-main-Hopf} using \eqref{eq-pwl-proof-hopf-2}, but for this we make the following assumption on $c_{\pm}$:
$$1 < 2c_{-} < 2c_{+}.$$

Define $\psi(x):= \frac{F(x) - C_{+}}{C_{-}}$, with $F(x)$ given by \eqref{F-Hopf-special} and  $C_{+}=\frac{c_{+} + c_{-} }{2}$ and $C_{-}=\frac{c_{+} - c_{-} }{2}$. This assumption implies that $0 < C_{-} < C_{+}$, and that $\psi(-1) < -1$ and $\psi(1) < 1$, for $\delta$ sufficiently small. Moreover, using the definition of $F$, we have $\psi(0) < -1$, $\psi'(0) = 0$, $\psi''(0) > 0$, $\psi'(x) > 0$ for $x\in (0,1]$ and $\psi'(x) < 0$ for $x\in [-1,0)$, for each $\delta$ sufficiently small.

Now, with the above assumption in mind, one can prove Theorem \ref{thm-main-Hopf} in a similar way using \eqref{eq-pwl-proof-hopf-2}. However, the proof is given for $c_{+} = 3$ and $c_{-} = 1$ for the sake of  readability. 


\end{remark}

\subsection{Proof of Theorem \ref{thm-main-jump}}\label{section-proof-jump-main}\noindent


Consider the function
\begin{equation}
    \label{Fb-special}
    F_b(x):=P_{e}(x) + bx + \delta P_{o}(x),
\end{equation}
in which $b$ is a breaking parameter kept close to $0$, $\delta$ is a perturbation parameter close to $0$, $P_{e}$ is given by 
$$P_{e}(x) = \displaystyle\frac{x^{4}}{4} - \frac{\eta^{2}}{2}x^{2},$$
where $\eta$ is a positive and fixed constant satisfying $0 < \eta < \frac{\sqrt{2}}{2}$, and $P_{o}$ is $C^\infty$-smooth in $\mathbb R$ and odd in the symmetric set $D:=(-\infty,-\eta]\cup [\eta,\infty)$ (that is, $P_{o}(-x)=-P_{o}(x)$ for all $x\in D$). Furthermore, we assume that $P_{o}(\eta)=P'_{o}(\eta)=0$. Since $P_{o}$ is odd in $D$, we have $P_{o}(-\eta)=P'_{o}(-\eta)=0$.

 The even polynomial $P_{e}$ has a zero of multiplicity two positioned at the origin, and two simple zeros $z_{\pm} = \pm\sqrt{2}\eta\in (-1, 1)$. It is important to note that, for $b = 0$, the points $x_{\pm} = \pm\eta$ are critical points of Morse type (both minima) of $F_{0}$ and $F_{0}(x_{-}) = F_{0}(x_{+})$, for every $\delta$ small enough ($x_{\pm}$ do not depend on $\delta$). We refer to Figure \ref{fig-graph-pe}. 

 Using the properties of $P_{e}$ and $P_{o}$, we can directly see that $F_0'(x)>0$ for $x\in (\eta,1]$ and $F_0'(x)<0$ for $x\in [-1,-\eta)$, for every $\delta$ sufficiently small (see assumption \eqref{jump-assumption}).

\begin{figure}[htb]
	\begin{center}
		\includegraphics[width=8cm]{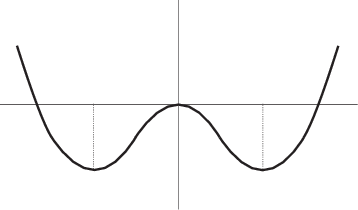}
       {\footnotesize \put(-62,70){$\eta$}
        \put(-180,70){$-\eta$}
        \put(-25,55){$\sqrt{2}\eta$}
        \put(-230,55){$-\sqrt{2}\eta$}}
         \end{center}
	\caption{Graph of the function $F_{b}$ defined in \eqref{Fb-special} for $b =\delta= 0$. The points $x_{\pm} = \pm\eta$ are critical points and $z_{\pm} = \pm\sqrt{2}\eta$ are simple zeros. All these points are inside the open interval $(-1,1)$.}
	\label{fig-graph-pe}
\end{figure}


Consider a PWL vector field of the form
\begin{equation}\label{eq-jump-pwl}
Z(x,y) = \left\{
\begin{array}{rcl}
   X(x,y) & = & (y - 3, -x), \\ 
   Y(x,y) & = & (y - 1, -x).
\end{array}
\right.
\end{equation}

The singular points $(0,3), (0,1)\in\Sigma$ are linear centers of $X,Y$, respectively. In Equation \eqref{LINREG-rescaled-rewrite}, we set the function $\varphi(x)=\psi_{b}(x)$ satisfying 
\begin{equation}\label{eq-jump-psi}
\psi_{b}(x) := F_{b}(x) -2.    
\end{equation}

Then, using \eqref{eq-jump-pwl}, we obtain the slow-fast system
\begin{equation}\label{eq-pwl-centers}
\left\{
\begin{array}{rcl}
   \dot{x} & = & y - F_{b}(x), \\ 
   \dot{y} & = & -\varepsilon^{2} x.
\end{array}
\right.
\end{equation}

System \eqref{eq-pwl-centers} is not yet a regularization for \eqref{eq-jump-pwl}, because $\psi_{b}$ in \eqref{eq-jump-psi} is not a transition function. Straightforward calculations lead to $F_{b}'(x) = \psi_{b}'(x)$, where the prime $'$ denotes the derivative with respect to the variable $x$. 

The slow divergence integral \eqref{SDI-JumpM} has the form
\begin{equation}\label{SDI-JumpM-reg}
I_{J}(x)
= \int_{x}^{\eta} \frac{\bigl(F_{0}'(s)\bigr)^2}{s}  ds
+ \int_{-\eta}^{L_{J}(x)} \frac{\bigl(F_{0}'(s)\bigr)^2}{s}  ds.
\end{equation}

We keep $x$ in \eqref{SDI-JumpM-reg} in a compact interval $D_1\subset (\sqrt{2}\eta,1)$. Then the fast relation function $L_{J}(x)$ and the canard cycle $\Gamma^J_x$ are well defined, and $L_{J}(x)\in (-1,-\sqrt{2}\eta)$, for all $x\in D_1$ and every $\delta$ sufficiently small (see Figures \ref{fig-jump} and \ref{fig-graph-pe}).

\subsubsection{Regularity of the breaking parameter}\noindent

We must verify that $b$ is a regular breaking parameter, and it is enough to prove this for $\delta = 0$. We know that the jump points $x_\pm$ persist for all $b$ close to $0$. Using asymptotic expansions in $b$ and the notation introduced in Section \ref{subsection-jump}, we can write the solutions $x_{\pm}(b)$ of $F_{b}'(x) = 0$ as
$$
x_{+}(b) = \eta - \displaystyle\frac{b}{2\eta^{2}} - \frac{3b^{2}}{8\eta^{5}} + O(b^{3}), \quad \quad x_{-}(b) = - \eta - \displaystyle\frac{b}{2\eta^{2}} + \frac{3b^{2}}{8\eta^{5}} + O(b^{3}).
$$

It easily follows that
$$h(b) = F_{b}\left(x_{+}(b)\right) - F_{b}\left(x_{-}(b)\right) = 2\eta b + O(b^{2}).$$

Therefore $h'(0)= 2\eta \neq 0$ and it is a submersion. In other words, $b$ is a regular breaking parameter. 

\subsubsection{Asymptotics of the fast relation function}\noindent

The next step is to find the asymptotics of the fast relation function $L_{J}(x)$. Recall that $b=0$. Assuming $ \delta = 0$, it follows that $F_{0}$ in \eqref{Fb-special} is an even function. Therefore, we can write $L_{J}(x) = -x + \delta L_{1}(x) + O(\delta^{2})$. In addition, since $P_{o}$ is odd in $D$ and $P_{e}$ is even, it is true that, for $x\in D_1\subset D$,
$$
P_{o}(L_{J}(x)) = -P_{o}(x) + O(\delta), \quad \quad 
P_{e}(L_{J}(x)) = P_{e}(x) - \delta P_{e}'(x)L_{1}(x) + O(\delta^{2}).    
$$

Therefore, from $F_{0}(x) = F_{0}(L_{J}(x))$ one obtains
\begin{equation*}
P_{e}(x) + \delta P_{o}(x) = P_{e}(x) - \delta P_{e}'(x)L_{1}(x)- \delta P_{o}(x) + O(\delta^{2}),
\end{equation*}
and finally one obtains
\begin{equation}\label{L1-expression}
    L_{1}(x) = \displaystyle\frac{-2P_{o}(x)}{P_{e}'(x)}=\frac{-2P_{o}(x)}{x(x^2-\eta^2)}, \ x\in D_1.
    \end{equation}


\subsubsection{Asymptotics of the slow divergence integral}\noindent

Now, we will study asymptotics of the slow divergence integral $I_J(x)$ given by \eqref{SDI-JumpM-reg}. Recall that $x\in D_1$ and $\delta$ is close to zero. 

We have
$$
F_{0}'(s) = P_{e}'(s) + \delta P_{o}'(s), \quad \quad 
\left(F_{0}'(s)\right)^{2} = \left( P_{e}'(s) \right)^{2} + 2\delta P_{e}'(s)P_{o}'(s) + O(\delta^{2}).
$$

Using the expression for $P_{e}$, we can write
\begin{equation}\label{eq-jump-sdi-asymp}
\begin{array}{rcl}
   \displaystyle\int_{x}^{\eta} \frac{(F_{0}'(s))^{2}}{s} ds  & = & \displaystyle\int_{x}^{\eta} s(s^{2} - \eta^{2})^{2}ds
+ 2 \delta \int_{x}^{\eta}(s^{2} - \eta^{2}) P_{o}'(s) ds + O(\delta^{2}), \\
   & & \\
    \displaystyle\int_{-\eta}^{L_{J}(x)} \frac{(F_{0}'(s))^{2}}{s} ds & = & \displaystyle\int_{-\eta}^{L_{J}(x)} s(s^{2} - \eta^{2})^{2}ds
+ 2 \delta \int_{-\eta}^{L_{J}(x)}(s^{2} - \eta^{2})P_{o}'(s) ds\\
& & \\
    & & + O(\delta^{2}).
\end{array}
\end{equation}

Now, we will handle the integrals of Equation \eqref{eq-jump-sdi-asymp}. Firstly, using straightforward computations and \eqref{L1-expression}, one can show that 
\begin{equation}\label{eq-jump-sdi-asymp-1}
\begin{array}{rcl}
\displaystyle\int_{x}^{\eta} s(s^{2} - \eta^{2})^{2}ds + \displaystyle\int_{-\eta}^{L_{J}(x)} s(s^{2} - \eta^{2})^{2}ds & = & -\delta x(x^{2} - \eta^{2})^{2}L_{1}(x) + O(\delta^{2}) \\
& = &  2\delta(x^{2} - \eta^{2}) P_{o}(x) + O(\delta^{2}).
\end{array}
\end{equation}

Recall that the function $ P_{o}$ is odd in $D$. This, $L_{J}(x)=-x+O(\delta)$ and partial integration imply
\begin{equation}\label{eq-jump-sdi-asymp-2}
   \begin{array}{rcl}
       \displaystyle\int_{x}^{\eta} (s^{2} - \eta^{2}) P_{o}'(s)ds & = & - (x^{2} - \eta^{2})P_{o}(x)
- \displaystyle\int_{x}^{\eta} 2sP_{o}(s) ds, \\
& & \\
      \displaystyle\int_{-\eta}^{L_{J}(x)} (s^{2} - \eta^{2}) P_{o}'(s)ds  & = & - (x^{2} - \eta^{2})P_{o}(x)
- \displaystyle\int_{-\eta}^{L_{J}(x)} 2s P_{o}(s) ds + O(\delta).
   \end{array} 
\end{equation}

We also have
$$
- \displaystyle\int_{-\eta}^{L_{J}(x)} 2s P_{o}(s) ds = - \displaystyle\int_{x}^{\eta} 2s P_{o}(s) ds + O(\delta).
$$

Therefore, combining Equations \eqref{eq-jump-sdi-asymp}, \eqref{eq-jump-sdi-asymp-1} and \eqref{eq-jump-sdi-asymp-2}, one obtains the following asymptotics for the slow divergence integral $I_{J}$:
\begin{equation*}
\begin{array}{rcl}
I_{J}(x)
& = & \displaystyle\int_{x}^{\eta} \frac{\bigl(F_{0}'(s)\bigr)^2}{s}  ds
+ \int_{-\eta}^{L_{J}(x)} \frac{\bigl(F_{0}'(s)\bigr)^2}{s}  ds \\
& & \\
& = & 2\delta(x^{2} - \eta^{2})P_{o}(x) - 4 \delta\left((x^{2} - \eta^{2})P_{o}(x) + \displaystyle\int_{x}^{\eta} 2s P_{o}(s) ds\right) + O(\delta^{2}) \\
& & \\
& = & -2\delta\left((x^{2} - \eta^{2})P_{o}(x) + 2 \displaystyle\int_{x}^{\eta} 2s P_{o}(s) ds\right) + O(\delta^{2}) \\

& & \\

& = & 2\delta \displaystyle \int_{\eta}^{x} \Big{(} 2sP_{o}(s) - (s^{2}-\eta^{2})P_{o}'(s) \Big{)} ds + O(\delta^{2}),
\end{array}
\end{equation*}
with $x\in D_1$. In the last step, we used partial integration.

Therefore, simple zeros of
\begin{equation}\label{eq-int-roots}
I_{1}(x) = \displaystyle \int_{\eta}^{x} \Big{(} 2sP_{o}(s) - (s^{2}-\eta^{2})P_{o}'(s) \Big{)} ds
\end{equation}
 persist as simple zeros of $I_{J}$, for small $\delta\ne 0$.

\subsubsection{Constructing zeros of the slow divergence integral}\noindent

First, we show that there is a smooth function $P_{o}:\mathbb R\to\mathbb R$ ($P_{o}$ is odd in $D$ and $P_{o}(\eta)=P'_{o}(\eta)=0$) such that the associated integral function $I_{1}$, defined by \eqref{eq-int-roots}, has $k$ simple zeros in the interior of $D_1$. Recall that $D_1\subset (\sqrt{2}\eta,1)$.

Consider a polynomial
$$\widetilde P(x) = (x^2-\eta^2)^3(x-\tilde{x}_{1})\cdot \dots \cdot (x - \tilde{x}_{k}),$$
with $\tilde{x}_{1} < \cdots <\tilde{x}_k$ contained in the interior of $D_1$. For $x\ge \eta $, $P_{o}$ is defined as follows:
\begin{equation}\label{eq-solution}
    P_{o}(x) := - (x^{2} - \eta^{2})\displaystyle\int_{\eta}^{x}\displaystyle\frac{\widetilde P'(s)}{(s^{2} - \eta^{2})^{2}}ds.
\end{equation}
From \eqref{eq-solution} it follows that $P_{o}(\eta)=P'_{o}(\eta)=0$.
Since $P_{o}$ has to be odd in $D$, we define $P_{o}(x):=-P_{o}(-x)$, for $x\le -\eta$. Using cut-off functions} (see \cite{Lovett}), it is not difficult to see that $P_{o}$ can be smoothly extended to $\mathbb R$ (notice that the behavior of $P_{o}$ for $x\in (-\eta,\eta)$ is not relevant when we study zeros of $I_J$). We fix this $P_{o}$.

Now, it suffices to prove that $I_1(x)=\widetilde P(x)$, for $x\ge \eta$. This will imply that $I_1$ has $k$ simple zeros $\tilde{x}_{1}, \dots,\tilde{x}_k$. Indeed, we have 
$$2sP_{o}(s) - (s^{2}-\eta^{2})P_{o}'(s)=\widetilde P'(s), \ s\ge\eta.$$
It follows from \eqref{eq-solution}. Using \eqref{eq-int-roots} and $\widetilde P(\eta)=0$, we conclude that $I_1(x)=\widetilde P(x)$, for $x\ge \eta$.

\smallskip

We know that the simple zeros of $I_1$ persist as simple zeros of $I_{J}$, for $\delta \neq 0$ sufficiently small. More precisely, we can fix a small $\delta\ne 0$ so that the slow divergence integral $I_J(x)$ has $k$ simple zeros $x_1<\cdots<x_k$ contained in the interior of $D_1$, with $x_i$ close to $\tilde{x}_i$, $i=1,\dots, k$.


Now, as well as in the Hopf case, take $x_{k+1} > x_{k}$ satisfying $x_{k+1}\in D_1$. It follows from Theorem \ref{thm-Jump-simple-zeros} that there exists a smooth function $b = b_{k}(\varepsilon)$ with $b_{k}(0) = 0$ such that system \eqref{eq-pwl-centers}, with $F_{b} = F_{b_{k}(\varepsilon)}$, has $k + 1$ hyperbolic limit cycles $\mathcal{O}^{x_i}_\varepsilon$, with $i=1,\dots,k+1$, for each $\varepsilon>0$ small enough. Moreover, the limit cycle $\mathcal{O}^{x_i}_\varepsilon$ is Hausdorff close to the canard cycle $\Gamma^J_{x_i}$ as $\varepsilon\to 0$. Notice that the $k+1$ limit cycles are contained in the stripe $\{-1<x<1\}$. 

\subsubsection{Constructing the transition function}\noindent

The construction of a suitable transition function $\varphi_{b,k}$ from $\psi_{b}$, defined in \eqref{eq-jump-psi}, can be done in the same fashion as in Section \ref{section-proof-Hopf-main}. Observe that $\psi_{b}(-1) < -1$ and $\psi_{b}(1) < 1$, for $b$ and $\delta$ close to zero. We also have $\psi_{0}'(x) < 0$ in $[-1,-\rho]$ and $\psi_{0}'(x) > 0$ in $[\rho, 1]$, where $\rho\in (0,1)$, with $\rho$ close to $1$ and $x_{k+1}, L_J(x_{k+1})\in (-\rho,\rho)$, and $\delta$ is close to zero. It follows from the property of $F_0'(x)$ given before \eqref{eq-jump-pwl}. Notice that $\psi_{b}$ has precisely three critical points in $(-\rho,\rho)$ and they are of Morse type, for $b$ and $\delta$ close to zero.

\begin{figure}[htb]
	\begin{center}
		\includegraphics[width=8cm]{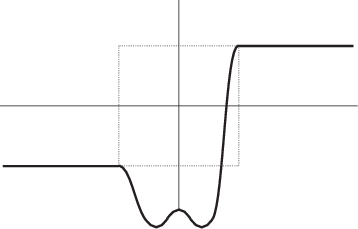}
         \end{center}
	\caption{Transition function $\varphi_{b,k}$ with three critical points in $(-1,1)$.}
	\label{fig-transition-jump}
\end{figure}

Now, following the steps of Section \ref{section-proof-Hopf-main}, we replace $\psi_{b}$ with a non-monotonic transition function $\varphi_{b,k}$ having precisely $3$ critical points, see Figure \ref{fig-transition-jump}. The $\varphi_{b,k}$-linear regularization of the PWL system \eqref{eq-jump-pwl} has the form
\begin{equation}
\left\{
\begin{array}{rcl}
   \dot{x} & = & y - (\varphi_{b(\varepsilon),k}(x)+2), \\ 
   \dot{y} & = & -\varepsilon^{2} x,
\end{array}
\right.
\end{equation}
and this system has $k+1$ hyperbolic limit cycles. This completes the proof of Theorem \ref{thm-main-jump}.

\begin{remark}
Consider
\begin{equation}\label{eq-pwl-proof-jump-2}
Z(x,y) = \left\{
\begin{array}{rclc}
   X(x,y) & = & (-c_{+} + y, - x), & x > 0, \\ 
   Y(x,y) & = & (-c_{-} + y, - x), & x < 0,
\end{array}
\right.
\end{equation}

The points $(0,c_{+})$ and $(0,c_{-})$ are linear centers of $X$ and $Y$, respectively (see Remark \ref{remark-Cpm}). The PWL system \eqref{eq-jump-pwl} is a special case of \eqref{eq-pwl-proof-jump-2} with $c_{+} = 3$ and $c_{-} = 1$. One can reproduce a completely analogous proof of Theorem \ref{thm-main-jump} using \eqref{eq-pwl-proof-jump-2}, under the following assumption on $c_{\pm}$:
$$\frac{1}{4} - \frac{\eta^{2}}{2} < 
    c_{-} < c_{+}.$$

If we define $\psi_{b}(x):= \frac{F_{b}(x) - C_{+}}{C_{-}}$, with $F_{b}(x)$ given in \eqref{Fb-special} and $C_\pm$ introduced in Remark \ref{remark-Cpm}, then it is not difficult to see that such a $\psi_{b}$ has similar properties as the function $\psi_{b}$ defined in \eqref{eq-jump-psi}. 
    


\end{remark}

\section*{Declarations}
 
\textbf{Ethical Approval:} Not applicable.
 \\
\\
 \textbf{Competing interests:} The authors declare that they have no conflict of interest.\\
 \\
\textbf{Funding:} The research of R. Huzak was supported by: Croatian Science Foundation (HRZZ) grant IP-2022-10-9820. Otavio Henrique Perez is supported by Sao Paulo Research Foundation (FAPESP) grant 2021/10198-9. \\
 \\
\textbf{Availability of data and materials:} Not applicable.

\bibliographystyle{plain}
\bibliography{bibtex}
\end{document}